\pdfoutput=1
\documentclass[a4paper,11pt,reqno]{amsart}

\usepackage[margin=1in,includehead,includefoot]{geometry}
\usepackage[utf8]{inputenc}
\usepackage[T1]{fontenc}
\usepackage{lmodern,microtype}
\usepackage[foot]{amsaddr}
\usepackage{upref}
\usepackage{mathtools,amssymb,amsthm}
\usepackage[mathcal]{euscript}
\usepackage{enumitem}
\usepackage{tikz}
\usepackage{collcell,array}
\usepackage{colortbl}
\usepackage[b]{esvect} 
\usepackage{graphicx}
\usepackage{hyperref}
\usepackage{bookmark}


\hypersetup{colorlinks=true,linkcolor=black,citecolor=black,filecolor=black,urlcolor=black}

\makeatletter
\def\@setfoot@addresses{%
  \def\author##1{}%
  \newif\if@firstaddr\@firstaddrtrue
  \def\address##1##2{%
    \if@firstaddr\@firstaddrfalse\else\par\fi
    \ignorespaces##1\unskip: \ignorespaces##2%
  }%
  \def\email##1##2{}%
  \addresses
}
\makeatother


\usepackage{mycommands}

\newtheorem{theorem}{Theorem}
\newtheorem{lemma}[theorem]{Lemma}
\theoremstyle{remark}
\newtheorem*{claim}{Claim}

\setlist[enumerate]{label=\textup{(\roman*)}, noitemsep, topsep=3pt plus 3pt, leftmargin=*, widest=iii}

\linepenalty=200 

\makeatletter
\setlength{\@fptop}{0pt plus 1fil}
\setlength{\@fpsep}{8pt plus 2fil}
\setlength{\@fpbot}{0pt plus 1fil}
\makeatother

\let\leq\leqslant
\let\geq\geqslant

\linespread{1.09}

\hypersetup{
  pdftitle={Sparse Kneser graphs are Hamiltonian},
  pdfauthor={Torsten M\"utze, Jerri Nummenpalo, Bartosz Walczak}
}

\title{Sparse Kneser graphs are Hamiltonian}

\author{Torsten M\"utze}
\address[Torsten M\"utze]{Department of Computer Science, University of Warwick, Coventry CV47AL, United Kingdom}
\email{torsten.mutze@warwick.ac.uk}

\author{Jerri Nummenpalo}
\address[Jerri Nummenpalo]{Department of Computer Science, ETH Z\"urich, 8092 Z\"urich, Switzerland}
\email{njerri@inf.ethz.ch}

\author{Bartosz Walczak}
\address[Bartosz Walczak]{Department of Theoretical Computer Science, Faculty of Mathematics and Computer Science, Jagiellonian University, Krak\'ow, Poland}
\email{walczak@tcs.uj.edu.pl}

\thanks{An extended abstract of this paper appeared in \emph{STOC 2018: Proceedings of the 50th Annual ACM SIGACT Symposium on Theory of Computing}, pages 912--919.\ ACM, New York, 2018.}
\thanks{Torsten M\"utze is also affiliated with Charles University, Faculty of Mathematics and Physics, and was supported by Czech Science Foundation grant GA~19-08554S, and by German Science Foundation grant~413902284.}
\thanks{Bartosz Walczak was partially supported by National Science Centre of Poland grant 2015/17/D/ST1/00585.}

\begin{document}

\begin{abstract}
For integers~$k\geq 1$ and~$n\geq 2k+1$, the \emph{Kneser graph}~$K(n,k)$ is the graph whose vertices are the $k$-element subsets of~$\{1,\ldots,n\}$ and whose edges connect pairs of subsets that are disjoint.
The Kneser graphs of the form~$K(2k+1,k)$ are also known as the \emph{odd graphs}.
We settle an old problem due to Meredith, Lloyd, and Biggs from the 1970s, proving that for every~$k\geq 3$, the odd graph~$K(2k+1,k)$ has a Hamilton cycle.
This and a known conditional result due to Johnson imply that all Kneser graphs of the form~$K(2k+2^a,k)$ with~$k\geq 3$ and~$a\geq 0$ have a Hamilton cycle.
We also prove that~$K(2k+1,k)$ has at least~$2^{2^{k-6}}$ distinct Hamilton cycles for~$k\geq 6$.
Our proofs are based on a reduction of the Hamiltonicity problem in the odd graph to the problem of finding a spanning tree in a suitably defined hypergraph on Dyck words.
\end{abstract}

\maketitle

\section{Introduction}

The question whether a given graph has a Hamilton cycle is one of the oldest and most fundamental problems in graph theory and computer science, shown to be NP-complete in Karp's seminal paper~\cite{MR0378476}.
The problem originates from the 19th-century ``Hamilton puzzle'', which involves finding a Hamilton cycle along the edges of a dodecahedron.
Efficient methods of generating Hamilton cycles in highly symmetric graphs (in particular, so-called Gray codes) are particularly important from the point of view of practical applications~\cite{MR3444818,MR1491049}.
Still, for various natural and extensively studied families of graphs, it is conjectured that a Hamilton cycle always exists, but finding one is a notoriously hard problem; see for instance~\cite{MR4075363,DBLP:journals/talg/SawadaW20}.
In this paper, we focus on a well-known instance of this phenomenon---the so-called Kneser graphs.

\subsection{Kneser graphs}
\label{sec:Knk}

For any two integers~$k\geq 1$ and~$n\geq 2k+1$, the \emph{Kneser graph}~$K(n,k)$ has the $k$-element subsets of $[n]:=\{1,\ldots,n\}$ as vertices and the pairs of those subsets that are disjoint as edges.
These graphs were introduced by Lov\'asz in his celebrated proof of Kneser's conjecture~\cite{MR514625}.
The proof uses topological methods to show that the chromatic number of~$K(n,k)$ is equal to~$n-2k+2$.
Lov\'asz's result initiated an exciting line of research~\cite{MR514626,MR1941810,MR2057690,MR1893009} and gave rise to the nowadays flourishing fields of topological combinatorics and computational topology, see e.g.~\cite{MR3210821,MR3215297}.
Apart from the above, Kneser graphs have many other interesting properties.
For instance, the maximum size of an independent set in~$K(n,k)$ is equal to~$\binom{n-1}{k-1}$, by the famous Erd\H{o}s-Ko-Rado theorem~\cite{MR0140419}.

\subsection{Hamilton cycles in Kneser graphs}

As indicated before, it has long been conjectured that Kneser graphs have Hamilton cycles.
Apart from one obvious exception, namely the Petersen graph~$K(5,2)$ shown in Figure~\ref{fig:K52}, no other negative instances are apparent.
Observe that Kneser graphs are vertex-transitive, that is, they look the same from the point of view of any vertex.
This makes them an excellent test case for a famous and vastly more general conjecture due to Lov\'asz~\cite{MR0263646}, which asserts that any connected and vertex-transitive graph has a Hamilton cycle, apart from the Petersen graph and four other exceptional instances.

We proceed by giving an account of the long history of finding Hamilton cycles in Kneser graphs.
The degree of every vertex in~$K(n,k)$ is~$\binom{n-k}{k}$, so for fixed~$k$, increasing~$n$ also increases the vertex degrees, which intuitively makes the task of finding a Hamilton cycle easier.
The density is also witnessed by cliques of size~$c\geq 3$, which are present for~$n\geq ck$ and absent for~$n<ck$.
The sparsest case, for which finding a Hamilton cycle is intuitively hardest, is when~$n=2k+1$.
The corresponding graphs $O_k:=K(2k+1,k)$, for~$k\geq 1$, are known as \emph{odd graphs}.
They include the Petersen graph~$O_2=K(5,2)$.
The odd graphs~$O_2$ and~$O_3$ are illustrated in Figures~\ref{fig:K52} and~\ref{fig:K73}, respectively.
Note that all vertices in the odd graph~$O_k$ have degree~$k+1$, which is only logarithmic in the number of vertices.
The conjecture that the odd graph~$O_k$ has a Hamilton cycle for every~$k\geq 3$ originated in the 1970s, in papers by Meredith and Lloyd~\cite{MR0457282,MR0321782} and by Biggs~\cite{MR556008}.
A~stronger version of the conjecture asserts that~$O_k$ even has~$\lfloor (k+1)/2\rfloor$ edge-disjoint Hamilton cycles.
Already Balaban~\cite{balaban:72} exhibited a Hamilton cycle for the cases~$k=3$ and~$k=4$, and Meredith and Lloyd described one for~$k=5$ and~$k=6$.
Later, Mather~\cite{MR0389663} also solved the case~$k=7$.
With the help of computers, Shields and Savage~\cite{MR2020936} found Hamilton cycles in~$O_k$ for all values of~$k$ up to~$13$.
They also found Hamilton cycles in~$K(n,k)$ for all~$n\leq 27$ (except for the Petersen graph).

\begin{figure}[t]
\includegraphics[scale=0.898]{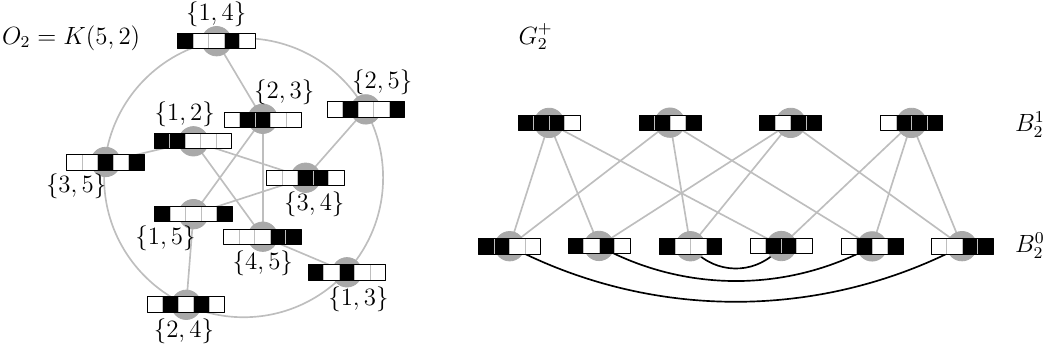}
\caption{The Petersen graph~$O_2=K(5,2)$ (left) and the graph~$G_2^+$ (right) that is isomorphic to it.
The isomorphism is defined in the proof of Lemma~\ref{lem:Ok-cube}.
The vertices of the Petersen graph are shown as $2$-element subsets of $[5]=\{1,2,3,4,5\}$, together with the corresponding characteristic bitstrings.
Black squares represent $1$-bits and white squares represent $0$-bits.}
\label{fig:K52}
\end{figure}

There is a long line of research devoted to proving that sufficiently dense Kneser graphs have a Hamilton cycle.
Heinrich and Wallis~\cite{MR510592} showed that~$K(n,k)$ has a Hamilton cycle if $n\geq 2k+k/(\sqrt[k]{2}-1)=(1+o(1))k^2/\ln 2$.
This was improved by B.~Chen and Lih~\cite{MR888679}, whose results imply that~$K(n,k)$ has a Hamilton cycle if $n\geq (1+o(1))k^2/\log k$, see~\cite{MR1405991}.
In another breakthrough, Y.~Chen~\cite{MR1778200} showed that~$K(n,k)$ is Hamiltonian when~$n\geq 3k$.
A~particularly nice and clean proof for the cases where~$n=ck$, $c\in\{3,4,\ldots\}$, was obtained by Y.~Chen and F\"uredi~\cite{MR1883565}.
Their proof uses Baranyai's well-known partition theorem for complete hypergraphs~\cite{MR0416986} to partition the vertices of~$K(ck,k)$ into cliques of size~$c$.
This proof method was extended by Bellmann and Sch\"ulke to any $n\geq 4k$~\cite{bellmann-schulke:19}.
The asymptotically best result currently known, again due to Y.~Chen~\cite{MR1999733}, is that~$K(n,k)$ has a Hamilton cycle if $n\geq (3k+1+\sqrt{5k^2-2k+1})/2=(1+o(1))2.618\ldots\cdot k$.

Another line of attack towards proving Hamiltonicity is to find long cycles in~$K(n,k)$.
To this end, Johnson~\cite{MR2046083} showed that there exists a constant~$c>0$ such that the odd graph~$O_k$ has a cycle that visits at least a $1-c/\sqrt{k}$~proportion of all vertices, which is almost all vertices as $k$~tends to infinity.
This was generalized and improved in~\cite{MR3759914}, where it was shown that~$K(n,k)$ has a cycle visiting a $2k/n$~proportion of all vertices.
The last result implies that~$O_k$ has a cycle visiting a $1-1/(2k+1)$~proportion of the vertices (e.g., the Petersen graph~$O_2$ has a cycle that visits~$8$ of its~$10$ vertices).

A~different relaxation of proving Hamiltonicity is to construct a cycle factor, that is, a collection of vertex-disjoint cycles that together cover all vertices of the graph.
From this point of view, a Hamilton cycle is a cycle factor consisting of a single cycle.
In this direction, Johnson and Kierstead~\cite{MR2128031} showed that the edges of~$O_k$ can be partitioned into cycle factors for odd~$k$ and into cycle factors and one matching for even~$k$.
A~different cycle factor in~$O_k$, which turns out to be crucial for our present result, was constructed in~\cite{MR3738156}.
It is shown in Figure~\ref{fig:K73} for the case~$k=3$.

\subsection{Bipartite Kneser graphs}
\label{sec:Hnk}

Bipartite Kneser graphs form another family of vertex-transi\-tive graphs closely related to Kneser graphs.
The \emph{bipartite Kneser graph}~$H(n,k)$ has all $k$-element and all $(n-k)$-element subsets of~$[n]$ as vertices and all pairs of these subsets such that one is contained in the other as edges.
It has been a long-standing problem to show that~$H(n,k)$ has a Hamilton cycle.
A~detailed account of the historic developments is given in~\cite{MR3759914}.
Also here, the sparsest case~$H(2k+1,k)$ resisted all attacks for more than three decades, and the question whether~$H(2k+1,k)$ has a Hamilton cycle became known as the \emph{middle levels conjecture}.
This conjecture has been recently solved affirmatively in~\cite{MR3483129} (see also \cite{gregor-muetze-nummenpalo:18}), and the general case, the Hamiltonicity of~$H(n,k)$, has been settled subsequently in~\cite{MR3759914}.
Note that proving Hamiltonicity for the Kneser graph~$K(n,k)$ is arguably harder than for the bipartite Kneser graph~$H(n,k)$.
In particular, proving that the odd graphs $O_k=K(2k+1,k)$ are Hamiltonian is harder than the middle levels conjecture.
Specifically, from a Hamilton cycle $(x_1,\ldots,x_N)$ in~$K(n,k)$, where $N=\binom{n}{k}$, we can easily construct a Hamilton cycle or a Hamilton path in~$H(n,k)$, as follows.
Consider the sequences $C_1:=(x_1,\ol{x_2},x_3,\ol{x_4},\ldots)$ and $C_2:=(\ol{x_1},x_2,\ol{x_3},x_4,\ldots)$, where $\ol{x_i}:=[n]\setminus x_i$.
If $N$~is odd, then~$C_1$ and~$C_2$ together form a Hamilton cycle in~$H(n,k)$.
If $N$~is even, then~$C_1$ and~$C_2$ are two cycles in~$H(n,k)$ that can be joined to form a Hamilton path.
In fact, the arguments given in this paper easily give a Hamilton cycle in~$H(2k+1,k)$ for all~$k\geq 1$, providing an alternative proof of the middle levels conjecture; see Section~\ref{sec:mlc}.

\subsection{Our results}

We prove that the odd graphs $O_k=K(2k+1,k)$ with $k\geq 3$ contain Hamilton cycles.
That is, we resolve the sparsest case of the conjecture on the Hamiltonicity of Kneser graphs in the affirmative.

\begin{theorem}
\label{thm:odd}
For any integer\/~$k\geq 3$, the odd graph\/ $O_k=K(2k+1,k)$ has a Hamilton cycle.
\end{theorem}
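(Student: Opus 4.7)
The plan is to reduce the Hamiltonicity of $O_k$ to a tractable connectivity problem on an auxiliary structure, building on the cycle factor of $O_k$ constructed in the prior work cited above. That cycle factor partitions the vertex set of $O_k$ into vertex-disjoint cycles, so upgrading it to a single Hamilton cycle amounts to gluing cycles together along local modifications. Concretely, I would look for pairs (or small groups) of factor cycles that share a pattern permitting a two-edge swap: remove one edge from each of two cycles and insert two new edges that re-link them into a single longer cycle. The existence of a Hamilton cycle is then reduced to the existence of a spanning connected subgraph in the auxiliary (hyper)graph whose vertices are the factor cycles and whose edges record available swaps.

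First, I would move from $O_k$ to its bitstring model, via the isomorphism of Lemma~\ref{lem:Ok-cube}: each $k$-subset of $[2k+1]$ is identified with its characteristic bitstring of length $2k+1$, and the adjacency of $O_k$ becomes a rotation-plus-flip on such strings. Under this encoding, the cycles of the known factor are naturally parametrised by Dyck-like lattice paths, since cyclically rotating a characteristic string corresponds to moving along a factor cycle and the orbit is canonically represented by a Dyck word. This is what yields the ``hypergraph on Dyck words'' announced in the abstract: its vertices are the Dyck words indexing the factor cycles, and its hyperedges are small families of such words whose corresponding cycles admit a matching flippable pattern that simultaneously rewires them into a single cycle.

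I would then prove the key combinatorial statement: this hypergraph admits a spanning tree. Given such a tree, one performs the flips one by one along the tree edges; each flip fuses the groups of cycles at its endpoints into a single cycle, and after all merges the result is a single Hamilton cycle of $O_k$. The count of Hamilton cycles claimed in the abstract would then follow by counting distinct spanning trees (or even just distinct choices of certain flips), which is where the double-exponential lower bound $2^{2^{k-6}}$ can plausibly come from by observing that many flips commute and can be applied independently.

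I expect the spanning-tree step to be the main obstacle. The difficulty is twofold: first, designing a family of flip patterns that is simultaneously rich enough for the resulting hyperedges to connect all Dyck words and restricted enough that each flip genuinely merges two (or more) distinct cycles rather than rearranging edges within a single cycle; second, verifying connectivity of the Dyck-word hypergraph, which will most likely require an inductive construction over Dyck words (for example by fixing a prefix or a distinguished return to the axis and reducing to shorter words), together with a careful case analysis at the base. Whatever argument is used must genuinely fail at $k=2$, since the Petersen graph $O_2$ is not Hamiltonian; this constraint will dictate what the ``small cases'' in the induction look like and is likely to force $k\ge 3$ precisely at the step where enough flippable patterns first appear.
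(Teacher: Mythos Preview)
Your high-level plan matches the paper's: reduce Hamiltonicity to a spanning-tree problem on a hypergraph whose vertices are the Dyck words indexing the factor cycles. However, the concrete local move you describe---``remove one edge from each of two cycles and insert two new edges''---is a flipping $4$-cycle, and $O_k$ contains no $4$-cycles at all (two $k$-sets disjoint from a common $k$-set of $[2k+1]$ must overlap in at least $k-1$ elements). The paper is therefore forced to use flipping $6$- and $8$-cycles, so the hyperedges are genuine $3$- and $4$-sets rather than pairs; discovering workable base patterns (the paper's $\alpha,\beta,\gamma,\delta$) and propagating them via prepend/append/mirror operations on Dyck words (Lemma~\ref{lem:flip-closure}) is a substantial and non-obvious part of the argument that your sketch does not anticipate. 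A side remark: the factor cycles here are not given by cyclic rotation of characteristic strings as you suggest, but by a recursively defined bit-flip permutation $\pi(x)$ on strings of length $2k$, with adjacency in the model $G_k^+$ being single-bit flips plus complementation.

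Second, once hyperedges have size at least $3$, merely having a spanning tree is insufficient. Two tree hyperedges can share a Dyck word $x$, and if the corresponding flipping cycles happen to use the \emph{same} edge of the factor cycle $C(x)$, the symmetric difference no longer yields a single cycle. The paper handles this with an explicit \emph{conflict-free} condition---distinct hyperedges meeting at $x$ must mark distinct bit positions of $x$---and proves (Lemma~\ref{lem:conflict-free}) that its pattern family $\Psi_k$ satisfies it automatically. This compatibility requirement is absent from your outline and is precisely where a naive choice of flip patterns would break down.
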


Using the conditional results proved by Johnson~\cite{MR2836824}, Theorem~\ref{thm:odd} immediately yields the following more general statement.

\begin{theorem}
\label{thm:sparse}
For any integers\/~$k\geq 3$ and\/~$a\geq 0$, the Kneser graph\/~$K(2k+2^a,k)$ has a Hamilton cycle.
\end{theorem}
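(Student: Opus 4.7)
The plan is to derive Theorem~\ref{thm:sparse} from Theorem~\ref{thm:odd} via a straightforward induction on $a\geq 0$, using Johnson's conditional Hamiltonicity results~\cite{MR2836824} as a black box. The content of Johnson's work that is relevant here is precisely that Hamiltonicity of the odd graph $O_k=K(2k+1,k)$ propagates through Kneser graphs of the form $K(2k+m,k)$ whose parameter $m$ lies in a doubling chain $m=1,2,4,8,\ldots=2^0,2^1,2^2,\ldots$; concretely, a Hamilton cycle in $K(2k+m,k)$ can be lifted to a Hamilton cycle in $K(2k+2m,k)$. This is exactly the reason the sequence $n=2k+2^a$ appears in the statement.

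The induction itself is a one-liner. The base case $a=0$ is Theorem~\ref{thm:odd}, which furnishes a Hamilton cycle in $K(2k+1,k)$ for every $k\geq 3$. For the inductive step, assume that $K(2k+2^a,k)$ admits a Hamilton cycle; applying Johnson's doubling result with $m=2^a$ then yields a Hamilton cycle in $K(2k+2^{a+1},k)$, closing the induction.

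Since Theorem~\ref{thm:odd} is already in hand and Johnson's conditional result is external, I do not expect any genuine obstacle to overcome in this step. The derivation is purely formal, and the only thing to check along the way is that Johnson's hypotheses remain valid at each level of the induction, which is immediate because $k\geq 3$ is fixed throughout and $m=2^a$ is a positive integer at every step. All of the mathematical novelty of the paper sits in Theorem~\ref{thm:odd}, which is exactly why the excerpt describes Theorem~\ref{thm:sparse} as \emph{immediate}.
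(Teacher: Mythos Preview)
Your proposal is correct and matches the paper's approach exactly: the paper's proof of Theorem~\ref{thm:sparse} is the single line ``Combine Theorem~\ref{thm:odd} and~\cite[Theorem~1]{MR2836824},'' which is precisely the base-plus-Johnson argument you spell out. The only difference is cosmetic---you unpack the induction on $a$ explicitly, whereas the paper absorbs it into the citation.
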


We also establish the following counting version of Theorem~\ref{thm:odd}.

\begin{theorem}
\label{thm:count}
For any integer\/~$k\geq 6$, the odd graph\/ $O_k=K(2k+1,k)$ has at least\/~$2^{2^{k-6}}$ distinct Hamilton cycles.
\end{theorem}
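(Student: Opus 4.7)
The plan is to lift the reduction underlying Theorem~\ref{thm:odd}. That proof turns the problem of finding a Hamilton cycle in $O_k$ into the problem of finding a spanning tree in an auxiliary hypergraph $\mathcal{H}_k$ whose vertex set is the set of Dyck words of length $2k$ (one per cycle of a canonical cycle factor of $O_k$) and whose hyperedges encode ways in which those cycles can be glued together. Each admissible spanning structure $S$ in $\mathcal{H}_k$ produces a Hamilton cycle $C(S)$ in $O_k$ via a deterministic gluing procedure, and $S$ can be recovered from $C(S)$ by reading off which inter-cycle transitions are used, so the map $S \mapsto C(S)$ is injective. It therefore suffices to exhibit at least $2^{2^{k-6}}$ admissible spanning structures inside $\mathcal{H}_k$.

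Since the target bound is doubly exponential, the natural strategy is to locate $2^{k-6}$ pairwise independent local gadgets inside $\mathcal{H}_k$, each offering a free binary choice that preserves spanningness and connectedness. Concretely, I would search for $2^{k-6}$ disjoint pairs of parallel (or nearly parallel) hyperedges, or disjoint small subhypergraphs in which a local rotation swaps one spanning substructure for another. The abundance of such gadgets should follow from the combinatorial richness of Dyck words: since $|V(\mathcal{H}_k)| = C_k$ grows like $4^k$, a recursive or self-similar argument that roughly doubles the number of independent gadgets with each increment of $k$ should yield at least $2^{k-6}$ of them once $k \geq 6$, with the constant $6$ absorbing the base cases that seed the recursion.

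The main difficulty will be making the $2^{k-6}$ local choices genuinely independent in two senses. First, simultaneously flipping any subset of the bits must leave the resulting object an admissible spanning structure in $\mathcal{H}_k$; this is a disjointness-and-locality condition on the gadgets and should reduce to an almost-disjointness property of the cycles in the canonical cycle factor used by the reduction. Second, distinct bit-vectors must yield distinct Hamilton cycles in $O_k$, which requires pinpointing, for each gadget, a particular edge (or inter-cycle transition) of $O_k$ whose presence in $C(S)$ is controlled by that bit alone; I expect this to follow from careful bookkeeping in the gluing procedure, since the local rotation in a gadget should toggle precisely which of two specific edges appears in the final cycle. Once both properties are established, freely combining the $2^{k-6}$ bits yields the desired $2^{2^{k-6}}$ distinct Hamilton cycles.
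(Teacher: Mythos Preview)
Your high-level strategy matches the paper's: invoke Lemma~\ref{lem:reduction} (distinct conflict-free spanning trees of~$\cH_k$ yield distinct Hamilton cycles, and by Lemma~\ref{lem:conflict-free} every spanning tree of~$\cH_k$ is conflict-free) and then exhibit $2^{2^{k-6}}$ spanning trees. However, what you have written is a plan, not a proof: the gadgets are never specified, their independence is asserted rather than argued, and the count $2^{k-6}$ comes from a vague ``recursive doubling'' heuristic rather than an actual construction.

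The paper's implementation is concrete and differs from your guesses. Rather than locating parallel hyperedges or local rotations, the paper revisits the explicit recursive construction of the spanning tree in the proof of Lemma~\ref{lem:tree}. One step of that construction (building the tree $\cF_{k,5}$ on $F_{k,5}$) requires, for each Dyck word $v\in D_{k-5}$, a spanning tree on the block $1\revinv{D_4}0v$ together with a connecting triple back into $F_{k,4}$. The paper supplies \emph{two} such packages, based on two different partitions $D_4=E_4\cup F_4$ and $D_4=E'_4\cup F'_4$ with connecting triples $\alpha(1010)v$ and $\alpha(1100)v$ respectively, and the binary choice is which package to use for each~$v$. Independence is automatic because the blocks $1\revinv{D_4}0v$ for distinct~$v$ are pairwise disjoint pieces of the partition of~$F_{k,5}$. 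The count is then exactly $2^{|D_{k-5}|}$, and $|D_{k-5}|=\frac{1}{k-4}\binom{2k-10}{k-5}\geq 2^{k-6}$ for $k\geq 6$ gives the bound directly---no recursion on~$k$ is needed to accumulate gadgets. Injectivity of the map from spanning trees to Hamilton cycles is already the second assertion of Lemma~\ref{lem:reduction}, so your proposed edge-toggling bookkeeping is unnecessary.
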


The double-exponential growth of the number of Hamilton cycles guaranteed by Theorem~\ref{thm:count} is essentially best possible: since~$O_k$ has~$\smash[b]{\binom{2k+1}{k}}$ vertices, the number of Hamilton cycles in~$O_k$ is at most $\binom{2k+1}{k}!=2^{2^{\cO(k)}}$.
Note also that applying automorphisms of~$O_k$ to a a single Hamilton cycle yields at most $(2k+1)!=2^{\Theta(k\log k)}$ distinct Hamilton cycles, substantially fewer than guaranteed by Theorem~\ref{thm:count}.
In other words, Theorem~\ref{thm:count} is not an immediate consequence of Theorem~\ref{thm:odd}.

\subsection{Gray code algorithms}
\label{sec:algo}

Hamilton cycles in Kneser graphs and bipartite Kneser graphs are closely related to Gray codes.
A~\emph{combinatorial Gray code} is the algorithmic problem of generating all objects in a combinatorial class, such as bitstrings, permutations, combinations, partitions, trees, or triangulations, etc., in some well-defined order.
Gray codes have found widespread use in areas such as circuit testing, signal encoding, data compression, graphics, and image processing etc.---see the survey~\cite{MR1491049} and the references therein.
The ultimate goal for Gray code algorithms is to generate each new object from the previous one in constant time, which entails that consecutive objects may differ only by a constant amount.
A~Gray code thus corresponds to a Hamilton cycle in a graph whose vertices are the combinatorial objects and whose edges connect objects that differ only by such an elementary transformation.
More than half of the most recent volume of Knuth's seminal series \emph{The Art of Computer Programming}~\cite{MR3444818} is devoted to this fundamental subject.
The two hardest Gray code problems mentioned in Knuth's book (Problem~71 in Section~7.2.1.2 and Problem~56 in Section~7.2.1.3), including the middle levels conjecture, have been solved in the meantime, and efficient algorithms to generate these Gray code have been developed in~\cite{MR4075363} and~\cite{DBLP:journals/talg/SawadaW20}.
Recall from Section~\ref{sec:Hnk} that Hamiltonicity of the odd graphs is arguably harder than the middle levels conjecture.

Our proof of Theorem~\ref{thm:odd} is constructive and translates straightforwardly into an algorithm to compute a Hamilton cycle in the odd graph~$O_k$ in polynomial time (polynomial in the size of the graph, which is exponential in~$k$).
We can identify each $k$-element subset of~$[2k+1]$ with a bitstring of length~$2k+1$, where the $i$th bit is set to~$1$ if the element~$i$ is contained in the set and it is set to~$0$ otherwise; see Figure~\ref{fig:K52}.
A~Hamilton cycle in the odd graph thus corresponds to a Gray code listing of all bitstrings of length~$2k+1$ with exactly $k$~many $1$-bits, such that consecutive bitstrings differ in all but one position.
It remains open whether our proof can be translated into a constant-time algorithm to generate this Gray code, that is, an algorithm that in each step computes the bit that is not flipped in constant time, using only $\cO(k)$~memory space and polynomial initialization time.
To avoid costly complementation operations, such an algorithm could maintain two bitstrings, one the complement of the other, along with a flag indicating which of the two bitstrings is the current one; then, in each step, only a single bit in both bitstrings and the flag would need to be flipped.

\subsection{Proof idea}
\label{sec:idea}

\begin{figure}[t]
\includegraphics[scale=1]{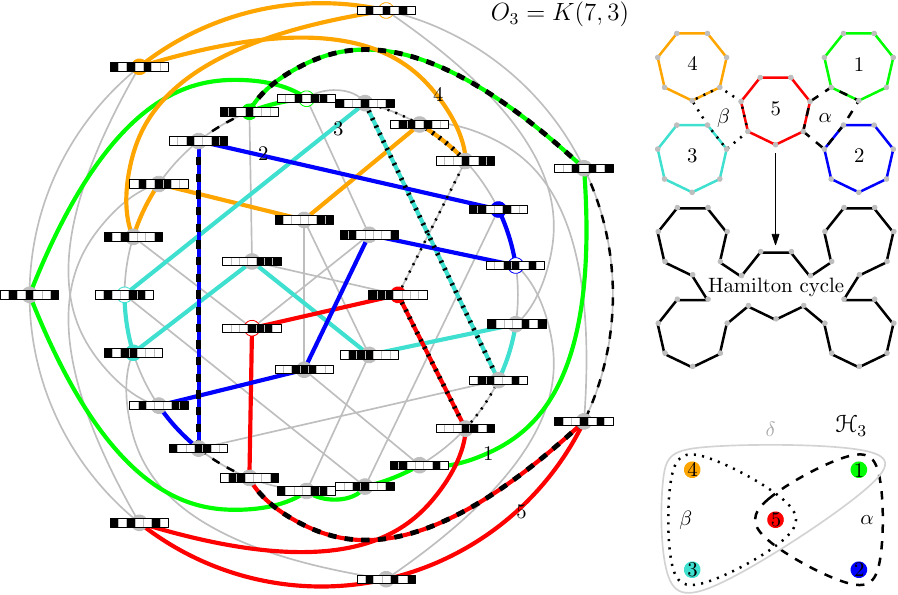}
\caption{Illustration of our Hamiltonicity proof for the odd graph~$O_3=K(7,3)$.
The vertices are represented as bitstrings, where $1$-bits are drawn as black squares and $0$-bits as white squares.
The bold cycles labeled~$1,\ldots,5$ constitute the cycle factor~$\cC_3$.
Two flipping cycles of length~$6$ are highlighted by dashed and dotted lines.
The five cycles from~$\cC_3$ correspond to the vertices of the hypergraph~$\cH_3$, and the two flipping cycles correspond to the hyperedges~$\alpha$ and~$\beta$.
There is another flipping cycle of length~$8$ in the graph~$O_3$, corresponding to the hyperedge~$\delta$ in~$\cH_3$, but this cycle is not shown in the figure.
As~$\{\alpha,\beta\}$ is a spanning tree in~$\cH_3$, taking the symmetric difference of the edge sets of the cycles in~$\cC_3$ with the edge sets of the two corresponding flipping cycles indicated in the figure yields a Hamilton cycle in the graph~$O_3$.
This construction step is shown schematically in the top right part of the figure.}
\label{fig:K73}
\end{figure}

We construct a Hamilton cycle in the odd graph~$O_k$ as follows; see Figure~\ref{fig:K73}.
We start with the cycle factor~$\cC_k$ in the odd graph~$O_k$ described in~\cite{MR3738156}.
It has the property that all of its cycles have the same length~$2k+1$ and the number of cycles is the $k$th Catalan number.
Furthermore, the cycles in~$\cC_k$ can be identified with so-called \emph{Dyck words} of length~$2k$, that is, bitstrings of length~$2k$ with the property that every prefix has at least as many $1$-bits as $0$-bits.
It is well known that the number of such Dyck words is equal to the $k$th Catalan number~\cite{MR1676282}.

Given the cycle factor~$\cC_k$, we modify it locally to join its cycles into a single Hamilton cycle in~$O_k$.
Each such modification involves $\ell$~cycles $C_1,\ldots,C_\ell$ from the factor~$\cC_k$ and a $2\ell$-cycle~$C'$ that shares exactly one edge with each of $C_1,\ldots,C_\ell$.
Specifically, $C'$~shares every second of its edges with one of the $\ell$~cycles, and every other edge of~$C'$ goes between two different cycles.
Consequently, taking the symmetric difference of the edge set of~$C'$ with the edge sets of $C_1,\ldots,C_\ell$ yields a single cycle on the vertex set of all $C_1,\ldots,C_\ell$.
We call a cycle~$C'$ with this property a \emph{flipping cycle}.
In Figure~\ref{fig:K73}, two flipping $6$-cycles are highlighted with dashed and dotted lines.
We perform this operation simultaneously with an appropriate set of mutually edge-disjoint flipping cycles so as to join all cycles in~$\cC_k$ into a single cycle.
Although the joining operation can work with flipping $2\ell$-cycles for any~$\ell\geq 2$, we will use only $6$-cycles ($\ell=3$) and $8$-cycles ($\ell=4$).
We cannot use flipping $4$-cycles ($\ell=2$), because the odd graph~$O_k$ has no $4$-cycles at all.

This approach can be formalized as follows.
We construct a hypergraph~$\cH_k$ whose vertices are the Dyck words of length~$2k$ representing the cycles of the factor~$\cC_k$.
Each $\ell$-edge ($3$-edge or $4$-edge) of~$\cH_k$ represents a flipping $2\ell$-cycle ($6$-cycle or $8$-cycle, respectively) that can be used to join $\ell$~cycles from~$\cC_k$ as described before.
In the example illustrated in Figure~\ref{fig:K73}, the hypergraph~$\cH_3$ consists of three hyperedges labeled~$\alpha$, $\beta$, and~$\delta$ of cardinalities~$3$, $3$, and~$4$, respectively.
Here is the key insight about the hypergraph~$\cH_k$: in order to prove that the odd graph~$O_k$ has a Hamilton cycle, it suffices to prove that the hypergraph~$\cH_k$ has a \emph{spanning tree}, that is, a connected and acyclic set of hyperedges covering all vertices.
In such a spanning tree, any two hyperedges intersect in at most one element.
For instance, the hypergraph~$\cH_3$ in Figure~\ref{fig:K73} has a spanning tree~$\{\alpha,\beta\}$.
The hypergraph~$\cH_k$ that we construct has the property that the flipping cycles represented by the hyperedges in any spanning tree are mutually edge-disjoint.
Consequently, every spanning tree in~$\cH_k$ corresponds to a collection of flipping cycles such that the symmetric difference of their edge sets and the edges of the cycles in~$\cC_k$ results in a Hamilton cycle in the odd graph~$O_k$.

The proof of Theorem~\ref{thm:count} exploits the degrees of freedom that are inherent in the construction above to provide double-exponentially many distinct spanning trees in~$\cH_k$, which give rise to double-exponentially many distinct Hamilton cycles in~$O_k$.
This general approach of reducing a Hamilton cycle problem to a spanning tree problem in a suitably defined auxiliary (hyper)graph has also been exploited in several other papers; see e.g.~\cite{gregor-muetze-nummenpalo:18,MR3854107,MR3599935,MR2925746,MR2548540,MR2836824,MR3483129,DBLP:journals/talg/SawadaW20}.

\subsection{Outline of the paper}

In Section~\ref{sec:preliminaries}, we introduce notation and terminology that will be used throughout this paper, and we recall the construction of the cycle factor~$\cC_k$ given in~\cite{MR3738156}.
In Section~\ref{sec:flip}, we describe how the cycles in~$\cC_k$ are joined to form a Hamilton cycle in~$O_k$, and we present the proofs of Theorems~\ref{thm:odd}--\ref{thm:count}.
The proofs of some technical lemmas are deferred to Sections~\ref{sec:flip-lemmas} and~\ref{sec:spanning}.
In Section~\ref{sec:mlc}, we give an alternative proof of the middle levels conjecture.
We conclude with some open problems in Section~\ref{sec:open}.

\section{Preliminaries}
\label{sec:preliminaries}

\subsection{Bitstrings and Dyck paths}

A~\emph{bitstring} is a finite sequence of digits~$0$ and~$1$ called the \emph{bits} of the bitstring.
The empty bitstring is denoted by~$\epsilon$.
The concatenation of two bitstrings~$x$ and~$y$ is denoted by~$xy$.
For every bitstring~$x$, we define $x^0:=\epsilon$ and $x^n:=x^{n-1}x$ for~$n\geq 1$.
The length of a bitstring~$x$ is denoted by~$|x|$.
The \emph{complement} of a bitstring~$x$, denoted by~$\ol{x}$, is the bitstring obtained from~$x$ by \emph{flipping} every bit, that is, by replacing every $1$-bit by a $0$-bit and vice versa.

The \emph{weight} of a bitstring~$x$ is the number of $1$-bits in~$x$.
We let~$B_k^0$ and~$B_k^1$ denote the sets of bitstrings of length~$2k$ with weights~$k$ and~$k+1$, respectively, and we let $B_k:=B_k^0\cup B_k^1$.
It follows that $|B_k^0|=\smash[t]{\binom{2k}{k}}$, $|B_k^1|=\smash[t]{\binom{2k}{k-1}}$, and $|B_k|=\smash[t]{\binom{2k+1}{k}}$.
We let~$D_k$ denote the set of bitstrings of length~$2k$ with weight~$k$ and with the property that in every prefix, the number of $1$-bits is at least the number of $0$-bits.
It is a well known fact that $|D_k|=\frac{1}{k+1}\binom{2k}{k}=\frac{1}{2k+1}\binom{2k+1}{k}$, which is the $k$th Catalan number.
We also define $D:=\bigcup_{k=0}^\infty D_k$, and we call every bitstring in~$D$ a \emph{Dyck word}.

It is sometimes convenient to represent a Dyck word~$x\in D_k$ by a \emph{Dyck path} of length~$2k$ in the integer lattice~$\mathbb{Z}^2$.
Every $1$-bit in the Dyck word~$x$ is represented by an \emph{up-step}, which changes the current coordinates by~$(+1,+1)$, and every $0$-bit is represented by a \emph{down-step}, which changes the current coordinates by~$(+1,-1)$; see Figure~\ref{fig:pi}.
The prefix property from the definition of~$D_k$ corresponds to the property that the lattice path never goes below the abscissa.

For a Dyck word $x=b_1b_2\cdots b_{2k}\in D_k$, where $b_1,\ldots,b_{2k}\in\{0,1\}$, we let $\revinv{x}:=\ol{b_{2k}b_{2k-1}\cdots b_1}$.
That is, $\revinv{x}$ is the complement of the reverse of~$x$, which is itself a Dyck word in~$D_k$.
For example, if $x=110010$, then $\revinv{x}=101100$.
We call the operation $x\mapsto\revinv{x}$ \emph{mirroring}.
In terms of Dyck path representation, it corresponds to taking the mirror image with respect to the vertical line~$x=k$.

\subsection{Graphs \texorpdfstring{$G_k$}{Gk} and \texorpdfstring{$G_k^+$}{Gk+}}

We use standard graph-theoretic terminology, where the edges of every graph that we consider are unordered pairs of vertices of the form~$\{u,v\}$.
We define~$G_k$ as the graph with vertex set~$B_k$ and with edges that connect pairs of bitstrings that differ by exactly one bit.
In other words, $G_k$ is the subgraph of the $2k$-dimensional hypercube induced by the bitstrings with weights~$k$ and~$k+1$.
For~$k\geq 1$, we also define~$G_k^+$ as the graph obtained from~$G_k$ by adding all edges of the form~$\{x,\ol{x}\}$ where~$x\in B_k^0$.
This construction is illustrated on the right hand side of Figure~\ref{fig:K52}, where the edges~$\{x,\ol{x}\}$ are highlighted in black.
Observe that while the graph~$G_k$ is bipartite, the graph~$G_k^+$ is not.

\begin{lemma}
\label{lem:Ok-cube}
For every\/~$k\geq 1$, the graph\/~$G_k^+$ is isomorphic to the odd graph\/~$O_k$.
\end{lemma}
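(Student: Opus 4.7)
The plan is to exhibit an explicit bijection $\phi \colon V(O_k) \to V(G_k^+)$ and then verify that it preserves adjacency in both directions. The key idea is to use the element $2k+1 \in [2k+1]$ as a ``flag bit'' that is not encoded in the positions of the bitstring but rather controls whether or not we take a complement.

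Concretely, for a $k$-subset $S \subseteq [2k+1]$, let $\chi_S \in \{0,1\}^{2k}$ denote the characteristic vector of $S \cap [2k]$. Define
\[
  \phi(S) := \begin{cases} \chi_S & \text{if } 2k+1 \notin S, \\ \overline{\chi_S} & \text{if } 2k+1 \in S. \end{cases}
\]
If $2k+1 \notin S$, then $\chi_S$ has weight $k$, so $\phi(S) \in B_k^0$; if $2k+1 \in S$, then $\chi_S$ has weight $k-1$, so $\phi(S) \in B_k^1$. The map $\phi$ is clearly injective, and since $|V(O_k)| = \binom{2k+1}{k} = |B_k| = |V(G_k^+)|$, it is a bijection.

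Next I would check adjacency by splitting into three cases according to whether the element $2k+1$ lies in neither, exactly one, or both of two $k$-subsets $S_1, S_2$. In the first case, both $\phi(S_1)$ and $\phi(S_2)$ lie in $B_k^0$, so they can be adjacent in $G_k^+$ only through an edge of the form $\{x,\overline{x}\}$, which happens iff $\phi(S_2) = \overline{\phi(S_1)}$, i.e.\ iff $S_1$ and $S_2$ partition $[2k]$, which (given $2k+1 \notin S_1 \cup S_2$) is equivalent to $S_1 \cap S_2 = \emptyset$. In the third case, both subsets contain $2k+1$, so they are non-disjoint in $O_k$; simultaneously, $\phi(S_1), \phi(S_2) \in B_k^1$ cannot be joined by a Hamming-1 edge (wrong weights) nor by a complementation edge (which only goes between $B_k^0$ vertices), so they are non-adjacent in $G_k^+$.

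The mixed case is the one that requires a small computation, and it is where the bulk of the verification sits. If $2k+1 \notin S_1$ and $2k+1 \in S_2$, then $\phi(S_1) = \chi_{S_1}$ is the indicator of $S_1 \subseteq [2k]$ while $\phi(S_2) = \overline{\chi_{S_2}}$ is the indicator of $[2k] \setminus (S_2 \cap [2k])$. These two bitstrings differ in position $i \in [2k]$ precisely when $i \in S_1 \cap S_2$ or $i \notin S_1 \cup S_2$, so their Hamming distance is $|S_1 \cap S_2| + |[2k] \setminus (S_1 \cup S_2)|$. Using $|S_1| = k$ and $|S_2 \cap [2k]| = k-1$, inclusion--exclusion gives $|[2k] \setminus (S_1 \cup S_2)| = 1 + |S_1 \cap S_2|$, so the Hamming distance equals $2|S_1 \cap S_2| + 1$. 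This equals $1$ (the condition for an edge of $G_k$) iff $S_1 \cap S_2 = \emptyset$, matching adjacency in $O_k$. Combining all three cases, $\phi$ is a graph isomorphism. The only mildly subtle step is the arithmetic in the mixed case, but it is short and I do not anticipate any real obstacle.
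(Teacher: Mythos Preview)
Your proof is correct and uses essentially the same isomorphism as the paper: your map $\phi$ is precisely the inverse of the paper's map $x\in B_k^0\mapsto x0$, $x\in B_k^1\mapsto\overline{x}1$. The paper simply asserts that edges and non-edges are preserved, whereas you spell out the three-case verification explicitly; both are valid, and yours is the same argument with the details filled in.
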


\begin{proof}
A~natural isomorphism between~$G_k^+$ and~$O_k$ is obtained by mapping every~$x\in B_k^0$ to~$x0$ and every~$x\in B_k^1$ to~$\overline{x}1$ and by interpreting the resulting bitstrings of length~$2k+1$ and weight~$k$ as characteristic vectors of $k$-element subsets of~$[2k+1]$.
It is straightforward to verify that this mapping preserves edges and non-edges.
\end{proof}

To prove Theorems~\ref{thm:odd} and~\ref{thm:count}, we will use Lemma~\ref{lem:Ok-cube} and construct Hamilton cycles in~$G_k^+$ for all~$k\geq 3$.

\subsection{Cycle factor \texorpdfstring{$\cC_k$}{Ck} in \texorpdfstring{$G_k^+$}{Gk+}}
\label{sec:2factor}

A~\emph{cycle factor} in a graph is a collection of vertex-disjoint cycles that together cover all vertices of the graph.
The cycle factor~$\cC_k$ in~$G_k^+$, which we will define shortly, was introduced and analyzed in~\cite{MR3738156}.
The cycles in~$\cC_k$ correspond to Dyck words in~$D_k$ as follows.
For every Dyck word~$x\in D_k$, we define a permutation~$\pi(x)$ of the set~$[2k]$.
Then, we define a path~$P(x)$ in~$G_k$ whose subsequent vertices are obtained by starting from~$x$ and flipping the bits one by one at positions determined by the sequence $\pi(x)=(a_1,\ldots,a_{2k})$, ending at~$\ol{x}$.
Finally, we add the edge~$\{x,\ol{x}\}$ to~$P(x)$, obtaining a cycle~$C(x)$ in~$G_k^+$ that becomes a member of~$\cC_k$.

We let $(a_1,\ldots,a_n)$ denote the sequence of integers $a_1,\ldots,a_n$.
We generalize this notation allowing~$a_i$ to be itself an integer sequence---in that case, if $a_i=(b_1,\ldots,b_m)$, then $(a_1,\ldots,a_n)$ is shorthand for $(a_1,\ldots,a_{i-1},b_1,\ldots,b_m,a_{i+1},\ldots,a_n)$.
The empty integer sequence is denoted by~$()$.
For an integer sequence $\pi=(a_1,\ldots,a_n)$ and an integer~$a$, we define
\begin{equation*}
a+\pi:=(a+a_1,\ldots,a+a_n), \hspace{4em} a-\pi:=(a-a_1,\ldots,a-a_n).
\end{equation*}

\begin{figure}[t]
\includegraphics[scale=0.916]{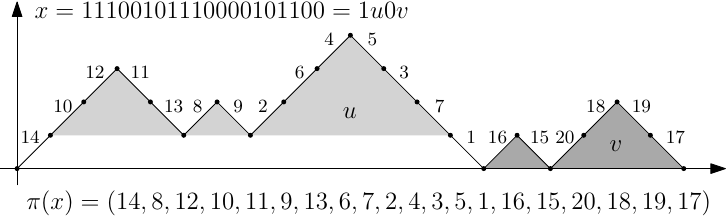}
\caption{Dyck path representation of a Dyck word~$x\in D_{10}$ and the permutation~$\pi(x)$.
The numbers on top of the Dyck path edges represent the order in which its up-steps and down-steps occur in the sequence~$\pi(x)$, i.e., they are equal to the inverse permutation~$(\pi(x))^{-1}$ when read from left to right.}
\label{fig:pi}
\end{figure}

It is clear that every non-empty Dyck word~$x\in D$ has a unique decomposition of the form~$x=1u0v$, where~$u,v\in D$; see Figure~\ref{fig:pi}.
Using this fact, for every Dyck word~$x\in D$, we define an integer sequence~$\pi(x)$ of length~$|x|$ as follows, by induction on~$|x|$:
\begin{equation}
\label{eq:pi}
\begin{aligned}
\pi(\epsilon) &:= (),\\
\pi(1u0v)     &:= \bigl(|u|+2,\:(|u|+2)-\pi(\revinv{u}),\:1,\:(|u|+2)+\pi(v)\bigr) \quad \text{for any }u,v\in D.
\end{aligned}
\end{equation}
The sequence~$\pi(x)$ for~$x\in D_k$ satisfies the following properties:
\begin{enumerate}
\item\label{prop:paths-i} $\pi(x)$ is a permutation of the set~$[2k]$;
\item\label{prop:paths-ii} if $\pi(x)=(a_1,\ldots,a_{2k})$, then the bit of~$x$ at position~$a_i$ is~$0$ for $i$~odd and~$1$ for $i$~even.
\end{enumerate}
To see why, we apply induction on~$|x|$.
The base case $x=\epsilon$ clearly satisfies both~\ref{prop:paths-i} and~\ref{prop:paths-ii}.
If $x\neq\epsilon$, then the sequence $\pi(x) = \pi(1u0v)$ is, by definition, a concatenation of four sequences that are, by induction, permutations of the sets $\{|u|+2\}$, $\{(|u|+2)-|u|,\ldots,(|u|+2)-1\}$, $\{1\}$, and $\{(|u|+2)+1,\ldots,(|u|+2)+|v|\}$.
These sets form a partition of~$[2k]$, which proves~\ref{prop:paths-i}.
To prove~\ref{prop:paths-ii}, we distinguish on which of the four aforementioned sets $a_i$~belongs to.
Suppose $a_i\in\{2,\ldots,|u|+1\}$.
It follows from \ref{prop:paths-i} and~\eqref{eq:pi} that $a_i$~is the $(i-1)$th entry of the sequence $(|u|+2)-\pi(\revinv{u})$.
Let $\pi(\revinv{u}) = (b_1,\ldots,b_{|u|})$.
By the induction hypothesis, the bit of~$\revinv{u}$ at position~$b_{i-1}$ is~$0$ if and only if $i$~is even.
The bit of~$x$ at position~$a_i$ is the bit of~$u$ at position $a_i-1=(|u|+1)-b_{i-1}$, which is the complement of the bit of~$\revinv{u}$ at position~$b_{i-1}$.
Therefore, the bit of~$x$ at position~$a_i$ is~$1$ if and only if $i$~is even, as claimed in~\ref{prop:paths-ii}.
We leave the analysis of the remaining cases to the reader.

In terms of Dyck path representation, we can interpret~$\pi(x)$ as the alternating order of down-steps and up-steps of the Dyck path~$x$; see Figure~\ref{fig:pi}.
The first term of~$\pi(x)$ represents the first down-step that touches the abscissa---it goes from~$(|u|+1,1)$ to~$(|u|+2,0)$.
The next part of~$\pi(x)$ represents the up-steps and down-steps of the part~$u$ of the Dyck path between~$(1,1)$ to~$(|u|+1,1)$ in the order obtained recursively on the mirror image of~$u$.
The next term of~$\pi(x)$ represents the first up-step, which goes from~$(0,0)$ to~$(1,1)$.
The final part of~$\pi(x)$ represents the down-steps and up-steps of the part~$v$ of the Dyck path between~$(|u|+2,0)$ to~$(|x|,0)$ ordered recursively.

Now, let~$x\in D_k$ and $\pi(x)=(a_1,\ldots,a_{2k})$.
Using the properties~\ref{prop:paths-i} and~\ref{prop:paths-ii} above, we define a path $P(x)=(x_0,x_1,\ldots,x_{2k})$ in the graph~$G_k$ so that~$x_0=x$ and~$x_i$ is obtained from~$x_{i-1}$ by flipping the bit at position~$a_i$ for every~$i\in[2k]$, whence it follows that~$x_{2k}=\ol{x}$.
We call~$\pi(x)$ the \emph{bit-flip sequence} for~$P(x)$.
We define the set of paths~$\cP_k$ by
\begin{equation*}
\cP_k := \{P(x)\mid x\in D_k\}.
\end{equation*}
The set of paths~$\cP_3$ with the corresponding bit-flip sequences is illustrated in Figure~\ref{fig:P3}.

The following lemma is a consequence of the results of~\cite{MR3738156}.
For the reader's convenience, we provide a short direct proof of it in Section~\ref{sec:flip-lemmas}.

\begin{lemma}[\cite{MR3738156}]
\label{lem:paths}
For every\/~$k\geq 1$, the paths in\/~$\cP_k$ are mutually vertex-disjoint, and together they cover all vertices of\/~$G_k$.
\end{lemma}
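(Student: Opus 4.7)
The plan is to prove the lemma by induction on $k$. The base case $k=1$ is immediate: $D_1=\{10\}$ and $P(10)=(10,11,01)$ exhausts $B_1$. For the inductive step, I~would first unfold the recursion~\eqref{eq:pi} to show that, for $x=1u0v\in D_k$ with $u\in D_j$ and $v\in D_{k-j-1}$, the path $P(x)=(x_0,\ldots,x_{2k})$ splits into three consecutive segments: the single vertex $x_0=1u0v$; a middle segment $x_1,\ldots,x_{|u|+1}$ consisting of the $2j+1$ bitstrings $1y1v$ whose $u$-substrings $y$ run through the mirror images of the vertices of $P(\revinv{u})\in\cP_j$; and a final segment $x_{|u|+2},\ldots,x_{2k}$ consisting of the $2(k-j-1)+1$ bitstrings $0\bar{u}1w$ whose $v$-substrings $w$ run through the vertices of $P(v)\in\cP_{k-j-1}$.

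Since $(2k+1)|D_k|=\binom{2k+1}{k}=|B_k|$, it suffices to construct, for every $z\in B_k$, a \emph{unique} $x\in D_k$ with $z$ on $P(x)$. I~would do this using the $\pm1$ prefix sums $s_0,\ldots,s_{2k}$ of $z$ (with $1$-bits contributing $+1$ and $0$-bits $-1$), splitting into three cases. If $z_1=0$, let $p$ be the smallest index with $s_p=0$; a short partial-sum computation shows $z_p=1$ and that $u:=\overline{z_2\cdots z_{p-1}}$ lies in $D_j$ with $2j=p-2$, and applying the induction hypothesis to $w:=z_{p+1}\cdots z_{2k}$ yields a unique $v\in D_{k-j-1}$ with $w$ on $P(v)$, placing $z$ in the final segment of $P(1u0v)$. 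If $z_1=1$ and $z\in D_k$, take $x:=z$, placing $z$ at the start of $P(z)$. If $z_1=1$ and $z\notin D_k$, take $v$ to be the longest Dyck suffix of $z$ and write $z=1y1v$ with $|y|=2j$; the induction hypothesis applied to $\revinv{y}\in B_j$ recovers a unique $\tilde{u}\in D_j$, and setting $u:=\revinv{\tilde{u}}$ places $z$ in the middle segment of $P(1u0v)$.

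The main obstacle is uniqueness of the split $z=1y1v$ in the third case, since a priori $z$ might admit several representations of that form. Suppose $z=1y1v=1y'1v'$ with both $v,v'$ Dyck and $|v|>|v'|$, so $v'$ is a proper suffix of $v$ starting at position $t+1$ in $v$ for some $t\geq1$. Both $v$ and $v'$ being Dyck forces the prefix sum $s^v_t$ of $v$ to equal $0$; since $v$ is Dyck, $s^v_{t-1}\geq0$, hence $s^v_{t-1}=1$ and the $t$-th bit of $v$ is~$0$. But that bit is precisely the ``middle $1$'' of the $v'$-split, a contradiction. The remaining cross-case overlaps are easy: the first bit of $z$ separates the $z_1=0$ case from the others, and if a Dyck $z$ were of the form $1y'1v'$ with $v'$ Dyck, the weight constraint would force $s_{|y'|+1}=-1$, contradicting $z$ being Dyck. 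Hence $z\mapsto x$ is well-defined, completing the induction.
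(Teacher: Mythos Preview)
Your approach is essentially the paper's: both unfold the recursion~\eqref{eq:pi} into the same three-segment description of $P(1u0v)$ and then run an induction on~$k$ via the resulting trichotomy on bitstrings $z\in B_k$ (first bit~$0$; Dyck; first bit~$1$ but not Dyck). The paper simply \emph{asserts} this trichotomy together with uniqueness of the decomposition in each case, whereas you supply the uniqueness argument for the split $z=1y1v$ explicitly; your counting shortcut $(2k+1)\,|D_k|=|B_k|$ is a convenient but inessential addition.

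There is one small gap in your third case. You take $v$ to be the longest Dyck suffix of~$z$ and then ``write $z=1y1v$'', but you never verify that the bit immediately preceding~$v$ is a~$1$. Your ``main obstacle'' paragraph shows only the converse direction: the bit preceding any \emph{non-maximal} Dyck suffix is~$0$. The missing check is short. If that bit were~$0$, then with $q:=2k-|v|$ we would have $s_{q-1}=s_q+1=s_{2k}+1$. Since $s_0=0$, $s_1=1$ (as $z_1=1$), and $s_{2k}\in\{0,2\}$, the largest index $q'<q$ with $s_{q'}=s_{2k}$ exists; by choice of~$q'$ and a discrete intermediate-value argument (using $s_{q-1}=s_{2k}+1$) one gets $s_j>s_{2k}$ for all $q'<j<q$, hence $s_j\ge s_{2k}$ for all $j\ge q'$, so the suffix starting at position $q'+1$ is Dyck and strictly longer than~$v$---contradicting maximality. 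With this filled in, your proof is complete and matches the paper's.
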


For every Dyck word~$x\in D_k$ and every bit position~$i\in[2k]$, we let~$e(x,i)$ denote the edge of the path~$P(x)$ along which the $i$th bit is flipped.
That is, if $\pi(x)=(a_1,\ldots,a_{2k})$, then the path~$P(x)$ contains edges $e(x,a_1),\ldots,e(x,a_{2k})$ in this order along the path from~$x$ to~$\ol{x}$.
For example, for~$x_1$ as in Figure~\ref{fig:P3}, we have $e(x_1,3)=\{101101,100101\}$ and $e(x_1,1)=\{100111,000111\}$.

For every Dyck word~$x\in D_k$ with~$k\geq 1$, the first vertex~$x$ and the last vertex~$\ol{x}$ of~$P(x)$ are adjacent in~$G_k^+$.
We let~$C(x)$ denote the cycle in~$G_k^+$ obtained by adding the edge~$\{x,\ol{x}\}$ to the path~$P(x)$.
We define
\begin{equation*}
\cC_k := \{C(x)\mid x \in D_k\}.
\end{equation*}
It follows from Lemma~\ref{lem:paths} that the set of cycles~$\cC_k$ is a cycle factor in~$G_k^+$.
Figure~\ref{fig:K73} illustrates the cycles in~$\cC_3$, which are obtained by closing the paths in~$\cP_3$ illustrated in Figure~\ref{fig:P3} and applying the isomorphism between~$G_k^+$ and~$O_k$ described in the proof of Lemma~\ref{lem:Ok-cube}.

\begin{figure}[t]
\newcounter{nodecount}
\newcommand\tabnode[1]{\addtocounter{nodecount}{1}\tikz \node (\arabic{nodecount}) {#1};}
\tikzstyle{every picture}+=[remember picture,baseline]
\tikzstyle{every node}+=[inner sep=0.5pt,anchor=base,minimum width=1.24cm,align=center,outer sep=1.5pt]
\begin{center}
\setlength{\tabcolsep}{7pt}
\begin{tabular}{cC|cC|cC|cC|cC}
\multicolumn{2}{c|}{\cellcolor{red!50}$x_1=111000$} & \multicolumn{2}{c|}{\cellcolor{green!50}$x_2=110100$} & \multicolumn{2}{c|}{\cellcolor{blue!50}$x_3=110010$} & \multicolumn{2}{c|}{\cellcolor{cyan!30}$x_4=101100$} & \multicolumn{2}{c}{\cellcolor{orange!50}$x_5=101010$} \\
$P(x_1)$ & \multicolumn{1}{c|}{$\pi(x_1)$} & $P(x_2)$ & \multicolumn{1}{c|}{$\pi(x_2)$} & $P(x_3)$ & \multicolumn{1}{c|}{$\pi(x_3)$} & $P(x_4)$ & \multicolumn{1}{c|}{$\pi(x_4)$} & $P(x_5)$ & \multicolumn{1}{c}{$\pi(x_5)$} \\ \hline
&&&&&&&\\[-2ex] 
\tabnode{111000} & 6 & \tabnode{110100} & 6 & 110010           & 4 & 101100           & 2 & 101010           & 2 \\
\tabnode{111001} & 2 & \tabnode{110101} & 4 & \tabnode{110110} & 2 & 111100           & 1 & \tabnode{111010} & 1 \\
\tabnode{101001} & 4 & \tabnode{110001} & 5 & \tabnode{100110} & 3 & 011100           & 6 & \tabnode{011010} & 4 \\
101101           & 3 & \tabnode{110011} & 2 & 101110           & 1 & 011101           & 4 & 011110           & 3 \\
\tabnode{100101} & 5 & 100011           & 3 & 001110           & 6 & \tabnode{011001} & 5 & 010110           & 6 \\
\tabnode{100111} & 1 & 101011           & 1 & 001111           & 5 & \tabnode{011011} & 3 & 010111           & 5 \\
000111           &   & 001011           &   & 001101           &   & \tabnode{010011} &   & 010101           &   \\
\end{tabular}
\begin{tikzpicture}[overlay]
\draw [very thick,dashed,rounded corners=3.5pt] (12.north west) rectangle (14.south east);
\draw [very thick,dashed,rounded corners=3.5pt] (2.north west) rectangle (4.south east);
\draw [very thick,dashed,rounded corners=3.5pt] (5.north west) rectangle (9.south east);
\draw [thick,rounded corners=3.5pt] (1.north west) rectangle (3.south east);
\draw [thick,rounded corners=3.5pt] (8.north west) rectangle (11.south east);
\draw [thick,rounded corners=3.5pt] (15.north west) rectangle (16.south east);
\draw [thick,rounded corners=3.5pt] (6.north west) rectangle (10.south east);
\draw [very thick,dotted,rounded corners=5.3pt] ([xshift=-2.1pt,yshift=2.1pt] 1.north west) rectangle ([xshift=2.1pt,yshift=-2.1pt] 3.south east);
\draw [very thick,dotted,rounded corners=5.3pt] ([xshift=-2.1pt,yshift=2.1pt] 13.north west) rectangle ([xshift=2.1pt,yshift=-2.1pt] 15.south east);
\draw [very thick,dotted,rounded corners=5.3pt] ([xshift=-2.1pt,yshift=2.1pt] 6.north west) rectangle ([xshift=2.1pt,yshift=-2.1pt] 10.south east);
\end{tikzpicture}
\end{center}
\caption{The set of paths $\cP_3=\{P(x_1),\ldots,P(x_5)\}$ in the graph~$G_3$ together with the bit-flip sequences $\pi(x_1),\ldots,\pi(x_5)$ that generate them.
The edges on the three flipping cycles that witness the flippable tuples~$\alpha(\epsilon)$, $\beta$, and~$\delta$ defined in~\eqref{eq:patterns} are indicated by dashed, dotted, and solid frames, respectively.}
\label{fig:P3}
\end{figure}

\section{Construction of a Hamilton cycle}
\label{sec:flip}

We describe how to modify the cycle factor~$\cC_k$ to join its cycles to a single Hamilton cycle.
As indicated in Section~\ref{sec:idea}, the modification operation consists in taking the symmetric difference with a carefully chosen set of cycles of length~$6$ or~$8$.
The key ingredient of our argument is Lemma~\ref{lem:reduction} below, which reduces the Hamiltonicity problem to a spanning tree problem in a suitably defined hypergraph.
To make these ideas formal, we introduce a few definitions.

A~\emph{flipping cycle} on~$D_k$ is a cycle in~$G_k$ of length~$2\ell$ that has exactly $\ell$~edges in common with $\ell$~distinct paths in the set~$\cP_k$ (one common edge with each path), and along the cycle these $\ell$~common edges alternate with $\ell$~edges that go between pairs of distinct paths.
Recall that each path~$P(x)\in\cP_k$ in~$G_k$ extends to the cycle~$C(x)$ in the graph~$G_k^+$ by adding the edge~$\{x,\ol{x}\}$, so a flipping cycle has exactly $\ell$~edges in common with $\ell$~distinct cycles in the set~$\cC_k$.

A~\emph{marked Dyck word} is a non-empty Dyck word in which exactly one bit has been \emph{marked}.
More formally, a \emph{marked Dyck word} is a pair~$(x,m)$ with~$x\in D_k$ and~$m\in[2k]$ for some~$k\geq 1$, where~$m$ is the position of the marked bit in~$x$.
We simplify notation of marked Dyck words by underlining the marked bit.
For instance, $1011\underline{0}0$ denotes the marked Dyck word~$(101100,5)$.
We define prepending to, appending to, and mirroring a marked Dyck word~$(x,m)$ in a natural way, as follows:
\begin{equation}
\label{eq:append-mirror-x}
\begin{alignedat}{2}
u\,(x,m)\,v    &:= (uxv,|u|+m) \quad \text{for any bitstrings }u\text{ and }v\text{ such that }uv\in D,\\
\revinv{(x,m)} &:= (\revinv{x},|x|+1-m).
\end{alignedat}
\end{equation}
For instance, if $(x,m)=1011\underline{0}0$, then $1(x,m)010=11011\underline{0}0010$ and $\smash[t]{\revinv{(x,m)}}=1\underline{1}0010$.
In terms of Dyck path representation, $(x,m)$~is a Dyck path where the $m$th step is marked; see Figure~\ref{fig:patterns}.
Under the operations of prepending, appending, and mirroring, the marked step remains at the same relative position.

A~\emph{marked\/ $\ell$-tuple} on a set of Dyck words $X\subseteq D_k$ is an unordered $\ell$-tuple of marked Dyck words of the form $\tau=\{(x_1,m_1),\ldots,(x_\ell,m_\ell)\}$, where $x_1,\ldots,x_\ell$ are distinct Dyck words in $X$, $m_1,\ldots,m_\ell\in[2k]$, and~$\ell\geq 3$.
The set $\{x_1,\ldots,x_\ell\}\subseteq X$ is called the \emph{support} of such a marked $\ell$-tuple~$\tau$ and it is denoted by~$\supp\tau$.
The index~$m_i$ is called the \emph{mark} of~$x_i$ in~$\tau$.
A~marked $\ell$-tuple $\tau=\{(x_1,m_1),\ldots,(x_\ell,m_\ell)\}$ on~$X$ is called a \emph{flippable\/ $\ell$-tuple} on~$X$ if there is a flipping $2\ell$-cycle in~$G_k$ that contains exactly the edges $e(x_1,m_1),\ldots,e(x_\ell,m_\ell)$ of the paths $P(x_1),\ldots,P(x_\ell)$, respectively.
We say that such a flipping cycle \emph{witnesses} the flippable tuple~$\tau$.

To get an intuition for these definitions, consider the Dyck words~$x_1$, $x_2$, and~$x_3$ in the first three columns in Figure~\ref{fig:P3}.
Then $\tau=\{1110\underline{0}0,11010\underline{0},1\underline{1}0010\}$ is a marked triple on~$D_3$ with support $\supp\tau=\{x_1,x_2,x_3\}$.
In fact, it is a flippable triple on~$D_3$ witnessed by a flipping $6$-cycle $
W=(100101,100111,100110,110110,110100,110101)$ that contains the edges~$e(x_1,5)$ of~$P(x_1)$, $e(x_2,6)$ of~$P(x_2)$, and $e(x_3,2)$ of~$P(x_3)$, indicated in Figure~\ref{fig:P3} by dashed frames.
By taking the symmetric difference with~$W$, the cycles~$C(x_1)$,~$C(x_2)$, and~$C(x_3)$ become joined into a single cycle.
This observation motivates the definitions that follow.

Let~$X\subseteq D_k$ (where~$k\geq 2$), let~$\cX$ be a set of flippable tuples on~$X$, and let $\cH=(X,\cX)$.
We call such a structure~$\cH$ a \emph{flippability hypergraph} on~$X$, and we apply a few standard hypergraph-theoretic terms to~$\cH$ (as follows), although the reader should realize that the members of~$\cX$ convey the marks as extra information in addition to the standard hypergraph structure.
Thus, the \emph{subhypergraph} of~$\cH$ \emph{induced} by a non-empty set~$U\subseteq X$ is defined as
\begin{equation*}
\cH[U] := \bigl(U,\:\{\tau\in\cX\mid\supp\tau\subseteq U\}\bigr).
\end{equation*}
A~\emph{spanning tree} of~$\cH$ is a subset of~$\cX$ defined as follows, by induction on~$|X|$.
If~$|X|=1$, then the only spanning tree of~$\cH$ is the empty set.
If~$|X|\geq 2$, then a set~$\cT\subseteq\cX$ is a spanning tree of~$\cH$ if and only if there are a flippable $\ell$-tuple~$\tau\in\cT$, a partition of~$X$ into non-empty subsets $X_1,\ldots,X_\ell$, and spanning trees $\cT_1,\ldots,\cT_\ell$ of $\cH[X_1],\ldots,\cH[X_\ell]$ (respectively) such that $\cT=\{\tau\}\cup\cT_1\cup\cdots\cup\cT_\ell$ and $\lvert\supp\tau\cap X_i\rvert=1$ for each~$i\in[\ell]$.
For instance, a one-element set $\cT=\{\tau\}\subseteq\cX$ is a spanning tree of~$\cH$ if and only if~$\supp\tau=X$, and a two-element set $\cT=\{\tau_1,\tau_2\}\subseteq\cX$ is a spanning tree of~$\cH$ if and only if $\supp\tau_1\cup\supp\tau_2=X$ and $\lvert\supp\tau_1\cap\supp\tau_2\rvert=1$.
In general, straightforward induction shows that the supports of any two flippable tuples in a spanning tree have at most one element of~$X$ in common.
A~\emph{conflict-free set} in~$\cH$ is a subset~$\cU$ of~$\cX$ such that the following two conditions are satisfied:
\begin{enumerate}
\item\label{item:conflict-free-i} the supports of any two flippable tuples in~$\cU$ have at most one element of~$X$ in common;
\item\label{item:conflict-free-ii} for any two distinct flippable tuples $\tau_1,\tau_2\in\cU$ whose supports have a common element~$x\in X$, the mark of~$x$ in~$\tau_1$ is different from the mark of~$x$ in~$\tau_2$.
\end{enumerate}
As it is mentioned above, every spanning tree satisfies condition~\ref{item:conflict-free-i}, and thus a \emph{conflict-free spanning tree} is a spanning tree that additionally satisfies condition~\ref{item:conflict-free-ii}.

The following lemma is the cornerstone behind our proofs of Theorems~\ref{thm:odd} and~\ref{thm:count}.
It reduces the problem of finding a Hamilton cycle in the graph~$G_k^+$ (which is isomorphic to the odd graph~$O_k$) to the problem of finding a conflict-free spanning tree in a flippability hypergraph on~$D_k$.

\begin{lemma}
\label{lem:reduction}
Let\/~$\cH$ be a flippability hypergraph on\/~$D_k$, where\/~$k\geq 3$.
If\/~$\cH$ has a conflict-free spanning tree, then the graph\/~$G_k^+$ has a Hamilton cycle.
Moreover, distinct conflict-free spanning trees of\/~$\cH$ give rise to distinct Hamilton cycles in\/~$G_k^+$.
\end{lemma}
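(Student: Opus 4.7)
My plan is to prove, by induction on $|X|$, the following stronger statement: for every non-empty $X\subseteq D_k$, every flippability hypergraph $\cH$ on $X$, and every conflict-free spanning tree $\cT$ of $\cH$ (together with a fixed choice of witnessing flipping cycle $W_\tau$ for each $\tau\in\cT$), the edge set
\begin{equation*}
H_\cT \;:=\; \biggl(\,\bigcup_{x\in X} E(C(x))\biggr)\triangle\biggl(\,\bigcup_{\tau\in\cT} E(W_\tau)\biggr)
\end{equation*}
is the edge set of a single cycle spanning $\bigcup_{x\in X} V(C(x))$. Applying this with $X=D_k$, combined with Lemma~\ref{lem:paths} and the isomorphism $G_k^+\cong O_k$ from Lemma~\ref{lem:Ok-cube}, yields a Hamilton cycle in $O_k$.

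The first step is to verify that the witnesses $\{W_\tau:\tau\in\cT\}$ are pairwise edge-disjoint, so that the nested symmetric difference above behaves cleanly. Any shared path edge $e(x,m)\in E(W_{\tau_1})\cap E(W_{\tau_2})$ would force $x\in\supp\tau_1\cap\supp\tau_2$ with identical mark $m$, contradicting condition~(ii) of conflict-freeness. Any shared joining edge $\{u,v\}$, with $u\in V(P(x_i))$ and $v\in V(P(x_j))$ for distinct $x_i,x_j$ (since joining edges of a flipping cycle connect different paths of $\cP_k$), would force $\{x_i,x_j\}\subseteq\supp\tau_1\cap\supp\tau_2$, contradicting condition~(i). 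The same reasoning shows that $E(W_\tau)$ meets $\bigcup_{x\in X} E(C(x))$ exactly in the $\ell$ path edges of $W_\tau$, as the closing edges $\{x,\ol{x}\}$ lie in $G_k^+\setminus G_k$ while flipping cycles are contained in $G_k$.

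The base case $|X|=1$ is immediate: $\cT=\emptyset$ and $H_\cT=E(C(x))$. For $|X|\geq 2$, unfold the recursive definition of spanning tree to obtain $\tau\in\cT$, a partition $X=X_1\sqcup\cdots\sqcup X_\ell$, and conflict-free spanning trees $\cT_i$ of $\cH[X_i]$ with $\supp\tau\cap X_i=\{x_i\}$ and mark $m_i$ for each $i$. By induction, each $H_{\cT_i}$ is a single cycle $Z_i$ on $\bigcup_{x\in X_i}V(C(x))$, and one checks directly that $H_\cT=\bigl(E(Z_1)\cup\cdots\cup E(Z_\ell)\bigr)\triangle E(W_\tau)$. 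The edge-disjointness analysis gives $E(W_\tau)\cap E(Z_i)=\{e(x_i,m_i)\}$ for each $i$: the path edge of $W_\tau$ on $P(x_i)$ survives inside $Z_i$ because condition~(ii) forbids any $\sigma\in\cT_i$ from removing it, and $W_\tau$ shares no other edge with $Z_i$. Hence symmetric-differencing removes exactly the $\ell$ edges $e(x_i,m_i)$ (splitting each $Z_i$ into a path $Z_i'$ between the endpoints of $e(x_i,m_i)$) and inserts the $\ell$ joining edges of $W_\tau$. Since $W_\tau$ itself is a cycle alternating between path and joining edges, traversing $W_\tau$ while substituting each path edge $e(x_{i(j)},m_{i(j)})$ by the entire path $Z_{i(j)}'$ yields a single cycle spanning $\bigcup_i V(Z_i)=\bigcup_{x\in X} V(C(x))$, completing the induction.

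For the second assertion, the map $\cT\mapsto H_\cT$ is injective because $\cT$ can be recovered from $H_\cT$: the set $E(\cC_k)\triangle H_\cT=\bigcup_{\tau\in\cT}E(W_\tau)$ decomposes as a disjoint union of flipping cycles by the edge-disjointness argument above, and each flipping cycle reveals its flippable tuple together with all of its marks (the path edge of $W_\tau$ on $P(x)$ is precisely $e(x,m)$, with $m$ being the mark of $x$ in $\tau$). The main technical obstacle is the merging step in the inductive argument---showing that substituting the $Z_i'$ for the path edges along the cyclic traversal of $W_\tau$ yields one cycle rather than several---which I handle by the explicit interleaving traversal just described, relying on the single-cycle structure of $W_\tau$ and the fact that each $Z_i$ is a single cycle by induction.
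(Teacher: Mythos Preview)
Your argument for the first assertion is correct and follows essentially the same inductive scheme as the paper: define the symmetric difference $H_\cT$, peel off one flippable tuple $\tau$ from the spanning tree, apply induction to the pieces $X_1,\ldots,X_\ell$, and then verify that symmetric-differencing the resulting cycles $Z_1,\ldots,Z_\ell$ with $W_\tau$ merges them into one cycle. Your preliminary edge-disjointness analysis and the check that $e(x_i,m_i)\in E(Z_i)$ (via condition~(ii)) are exactly the ingredients the paper uses.

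Your injectivity argument, however, has a gap. You claim that $\cT$ can be recovered from $B:=E(\cC_k)\triangle H_\cT=\bigcup_{\tau\in\cT}E(W_\tau)$ because $B$ ``decomposes as a disjoint union of flipping cycles.'' It is true that $B$ \emph{is} the edge-disjoint union of the $W_\tau$, but you have not shown that this decomposition is \emph{unique}. Edge-disjointness does not imply vertex-disjointness: if $\tau_1,\tau_2\in\cT$ share $x\in\supp\tau_1\cap\supp\tau_2$ with marks $m_1\neq m_2$, and if $m_1,m_2$ happen to be consecutive in $\pi(x)$, then $e(x,m_1)$ and $e(x,m_2)$ share a vertex $v$, so $W_{\tau_1}$ and $W_{\tau_2}$ meet at $v$. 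At such a vertex the graph $B$ has degree $4$ (two path edges, two joining edges), and the requirement that path and joining edges alternate in a flipping cycle still leaves two ways to pair them locally. So the cycle decomposition of $B$ is not a priori determined, and you cannot read off $\cT$ directly. The paper avoids this by proving injectivity inductively alongside the Hamiltonicity claim: it shows that in the Hamilton cycle $S(X,\cT)$ the only edges leaving each block $X_i$ are the two joining edges of $W_\tau$ adjacent to $e(x_i,m_i)$, which forces any conflict-free set $\cU$ with $S(X,\cU)=S(X,\cT)$ to contain $\tau$, and then recurses into the blocks. You should either supply a direct argument that the decomposition of $B$ into flipping cycles is unique, or adopt this inductive uniqueness argument.
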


\begin{proof}
For every flippable tuple~$\tau$ on~$D_k$, fix a flipping cycle~$W(\tau)$ in~$G_k^+$ that witnesses~$\tau$.
For a non-empty set~$X\subseteq D_k$, let~$G_k^+[X]$ denote the subgraph of~$G_k^+$ induced by the set of all vertices of the cycles~$C(x)$ with~$x\in X$.
For a non-empty set~$X\subseteq D_k$ and a conflict-free set~$\cU$ in~$\cH[X]$, let~$S(X,\cU)$ denote the symmetric difference of the edge sets of the cycles~$C(x)$ with~$x\in X$ and the cycles~$W(\tau)$ with~$\tau\in\cU$.
We prove the following statement, which immediately yields the lemma:

\begin{claim}
Let~$X$ be a non-empty subset of~$D_k$.
For every conflict-free spanning tree~$\cT$ in~$\cH[X]$, the set~$S(X,\cT)$ forms a Hamilton cycle in~$G_k^+[X]$.
Moreover, if $\cT$ is a conflict-free spanning tree in~$\cH[X]$ and $\cU$ is a conflict-free set in~$\cH[X]$ such that $S(X,\cT)=S(X,\cU)$, then $\cT=\cU$.
\end{claim}

The proof of the claim goes by induction on~$|X|$.
If~$|X|=1$, then the empty set is the unique conflict-free spanning tree in~$\cH[X]$, and $S(X,\emptyset)=C(x)$ for the unique~$x\in X$.
For the rest of the proof, suppose that~$|X|\geq 2$ and that the claim holds for all subsets of~$D_k$ smaller than~$X$.

Let~$\cT$ be a conflict-free spanning tree in~$\cH[X]$.
By the definition of a spanning tree, there are a flippable $\ell$-tuple~$\tau\in\cT$, a partition of~$X$ into non-empty subsets $X_1,\ldots,X_\ell$, and spanning trees $\cT_1,\ldots,\cT_\ell$ of $\cH[X_1],\ldots,\cH[X_\ell]$ (respectively) such that $\cT=\{\tau\}\cup\cT_1\cup\cdots\cup\cT_\ell$ and $\lvert\supp\tau\cap X_i\rvert=1$ for each~$i\in[\ell]$.
Since $\cT_1,\ldots,\cT_\ell\subset\cT$, the spanning trees $\cT_1,\ldots,\cT_\ell$ are conflict-free.
Therefore, by the induction hypothesis, the sets $S(X_1,\cT_1),\ldots,S(X_\ell,\cT_\ell)$ form Hamilton cycles in $G_k^+[X_1],\ldots,G_k^+[X_\ell]$.
Suppose $\tau=\{(x_1,m_1),\ldots,(x_\ell,m_\ell)\}$, where~$x_i\in X_i$ and~$m_i\in[2k]$ for~$i\in[\ell]$.
The unique common edge of~$W(\tau)$ with~$G_k^+[X_i]$ is the edge~$e(x_i,m_i)$, which belongs to~$S(X_i,\cT_i)$, as~$\cT$ is conflict-free, for~$i\in[\ell]$.
The set~$S(X,\cT)$ is the symmetric difference of $S(X_1,\cT_1),\ldots,S(X_\ell,\cT_\ell)$ and~$W(\tau)$, which is therefore a single cycle---a Hamilton cycle in~$G_k^+[X]$.

Now, suppose that~$\cH[X]$ has another conflict-free set~$\cU$ such that $S(X,\cT)=S(X,\cU)=:S$.
For each~$i\in[\ell]$, since the only edges in~$S$ that connect~$X_i$ with~$X\setminus X_i$ are those that precede and follow~$e(x_i,m_i)$ on~$W(\tau)$, these two edges along with~$e(x_i,m_i)$ belong to the same cycle witnessing some flippable tuple in~$\cU$.
It follows that the entire flipping cycle~$W(\tau)$ witnesses one of the flippable tuples in~$\cU$, which implies~$\tau\in\cU$.
The symmetric difference of~$S$ and~$W(\tau)$ is the disjoint union of $S(X_1,\cT_1),\ldots,S(X_\ell,\cT_\ell)$.
This implies that every flippable tuple in $\cU\setminus\{\tau\}$ is a flippable tuple on one of $X_1,\ldots,X_\ell$.
Therefore, we have $\cU=\{\tau\}\cup\cU_1\cup\cdots\cup\cU_\ell$, where~$\cU_i$ is a conflict-free set in~$\cH[X_i]$ such that $S(X_i,\cT_i)=S(X_i,\cU_i)$ for each~$i\in[\ell]$.
This and the induction hypothesis yield~$\cT_i=\cU_i$ for each~$i\in[\ell]$ and therefore~$\cT=\cU$.
\end{proof}

To apply Lemma~\ref{lem:reduction}, we need to define a flippability hypergraph on~$D_k$ that admits a conflict-free spanning tree.
In other words, we need to construct a sufficiently large set of flippable tuples.
Our construction works inductively and is based on the next lemma, which allows us to generate more flippable tuples from existing ones by prepending and appending certain bitstrings to them.
We introduce the following auxiliary notation for every flippable tuple $\tau=\{(x_1,m_1),\ldots,(x_\ell,m_\ell)\}$ on~$D_k$:
\begin{equation}
\label{eq:append-mirror-tau}
\begin{alignedat}{2}
u\tau v       &:= \{u(x_1,m_1)v,\ldots,u(x_\ell,m_\ell)v\} = \{(ux_1v,|u|+m_1),\ldots,(ux_\ell v,|u|+m_\ell)\}\\
              & \hspace{16em}\text{for any bitstrings }u\text{ and }v\text{ such that }uv\in D,\\
\revinv{\tau} &:= \{\revinv{(x_1,m_1)},\ldots,\revinv{(x_\ell,m_\ell)}\} = \{(\revinv{x_1},2k+1-m_1),\ldots,(\revinv{x_\ell},2k+1-m_\ell)\}.
\end{alignedat}
\end{equation}

\begin{lemma}
\label{lem:flip-closure}
If\/~$\tau$ is a flippable tuple, then
\begin{enumerate}
\item\label{item:flip-closure-i} $u\tau v$ is a flippable tuple for any bitstrings\/~$u$ and\/~$v$ such that\/~$uv\in D$ and\/~$|u|$ is even,
\item\label{item:flip-closure-ii} $u\revinv{\tau}v$ is a flippable tuple for any bitstrings\/~$u$ and\/~$v$ such that\/~$uv\in D$ and\/~$|u|$ is odd.
\end{enumerate}
\end{lemma}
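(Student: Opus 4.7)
The plan is to reduce the statement to a structural property of the paths $P(uxv)$ for $x\in D_k$ and bitstrings $u,v$ with $uv\in D$, and then lift the witnessing flipping cycle directly. I will call the structural property the \emph{sandwich lemma}: writing $P(uv) = (r_0,r_1,\ldots,r_{|u|+|v|})$ and decomposing the vertex $r_{|u|}$ as $r_{|u|} = u'v'$ with $|u'|=|u|$, the contiguous subpath of $P(uxv)$ consisting of its vertices at steps $|u|,|u|+1,\ldots,|u|+2k$ has every vertex of the form $u'(\cdot)v'$, and the middle $2k$ bits of these $2k+1$ vertices trace out $P(x)$ when $|u|$ is even, and the vertex-by-vertex mirror image $\revinv{P(\revinv{x})}$ (which is likewise a path from $x$ to $\ol{x}$) when $|u|$ is odd.

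I would prove the sandwich lemma by induction on $|uv|$, computing $\pi(uxv)$ from the recursive definition~\eqref{eq:pi}. Decomposing $uxv = 1u''0v''$ at its first matching pair of parentheses, one identifies which entries of $\pi(uxv)$ correspond to bit flips inside the $x$-part: they form a single contiguous block of length $2k$ located at positions $|u|+1,\ldots,|u|+2k$ of $\pi(uxv)$, and the block equals $|u|+\pi(x)$ when $|u|$ is even and $|u|$ plus the reverse of $\pi(x)$ when $|u|$ is odd. I expect this inductive step to be the main obstacle, since the decomposition $uxv = 1u''0v''$ may place the $x$-part entirely within $u''$, entirely within $v''$, or straddling the matching pair, and the recursion~\eqref{eq:pi} treats the two sides of this pair asymmetrically (via $\pi(\revinv{u''})$ versus $\pi(v'')$); this very asymmetry is the combinatorial source of the parity-dependent behaviour that appears in the two cases of Lemma~\ref{lem:flip-closure}.

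With the sandwich lemma in hand, the lifting is direct. Let $W = (w_0,w_1,\ldots,w_{2\ell-1})$ be a flipping cycle witnessing $\tau$. For case~\ref{item:flip-closure-i} define $W' := (u'w_0v',\,u'w_1v',\,\ldots,\,u'w_{2\ell-1}v')$, and for case~\ref{item:flip-closure-ii} define $W'' := (u'\revinv{w_0}v',\,u'\revinv{w_1}v',\,\ldots,\,u'\revinv{w_{2\ell-1}}v')$. Consecutive vertices differ in exactly one bit inside the middle $2k$ positions, so both are cycles in the ambient hypercube; and because the weights along $P(uv)$ alternate by $\pm1$, the parity of $|u|$ forces the weight of $u'v'$ to be exactly what is needed for the weights of $W'$ or $W''$ to alternate correctly between $k+(|u|+|v|)/2$ and $k+(|u|+|v|)/2+1$ (in the odd case the parity shift of $|u'|_1+|v'|_1$ compensates precisely for the $\mp 1$ shift caused by replacing $w_i$ with $\revinv{w_i}$). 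Finally, the sandwich lemma identifies each inner edge $e(x_i,m_i)$ of $W$ with the target edge $e(ux_iv,|u|+m_i)$ of $P(ux_iv)$ in case~\ref{item:flip-closure-i}, and with $e(u\revinv{x_i}v,|u|+|x_i|+1-m_i)$ of $P(u\revinv{x_i}v)$ in case~\ref{item:flip-closure-ii}, while the remaining edges of $W'$ or $W''$ connect vertices that lie on distinct paths of the cycle factor $\cC_{k+(|u|+|v|)/2}$ and are therefore shared with no single path. Hence $W'$ and $W''$ are the desired witnessing flipping cycles.
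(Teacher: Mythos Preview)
Your approach is correct and coincides in substance with the paper's; the difference is organisational. Instead of a single sandwich lemma for general $u,v$, the paper isolates three elementary operations---prepending a Dyck word $u$, appending a Dyck word $v$, and wrapping by $1(\cdot)0$---proves the corresponding path-embedding statement for each one directly (the first two via the one-line identity $\pi(xy)=(\pi(x),\,|x|+\pi(y))$ for $x,y\in D$, the third straight from~\eqref{eq:pi}), and then observes that every context $u(\cdot)v$ with $uv\in D$ is a composite of these three, with the parity of $|u|$ counting how many wraps occur. Your feared obstacle, the $x$-block ``straddling'' the outermost decomposition $uxv=1u''0v''$, does not in fact arise for $|u|\geq 1$: either $u$ has a nonempty Dyck prefix (so the first return of $uxv$ to height~$0$ happens inside $u$, placing $x$ entirely in $v''$), or every nonempty prefix of $u$ has positive height, whence the path stays above~$0$ throughout $x$ and the first return occurs after $x$ (placing $x$ entirely in $u''$). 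Your direct induction therefore dissolves into precisely the paper's three cases; the paper simply makes this factorisation explicit up front, which spares you from having to discover and prove the auxiliary identity $\mathrm{reverse}(\pi(x))=(2k+1)-\pi(\revinv{x})$ that your ``$|u|$ odd'' block description implicitly relies on.
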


The proofs of Lemma~\ref{lem:flip-closure} and of all subsequent lemmas stated in this section are deferred to Sections~\ref{sec:flip-lemmas} and~\ref{sec:spanning}.

We now specify the base case for our inductive construction of flippable tuples.
We found the following tuples on~$D_3$ and~$D_4$ with the help of a computer.
In fact, the computer search gave many more flippable tuples, and we carefully selected a subset that can be used to create a conflict-free spanning tree on~$D_k$.
The flippable tuples in this basic set~$\Phi$ of flippable tuples are called \emph{patterns}.
We let $\Phi:=\{\alpha(w)\mid w\in D\}\cup\{\beta,\gamma,\delta\}$, where
\begin{equation}
\label{eq:patterns}
\begin{alignedat}{2}
\alpha(w) &:= \{1w110\underline{0}0,1w1010\underline{0},1w\underline{1}0010\}, \qquad &
\gamma    &:= \{1\underline{1}001100,1101100\underline{0},11101\underline{0}00\}, \\ 
\beta     &:= \{11100\underline{0},1011\underline{0}0,\underline{1}01010\}, \qquad &
\delta    &:= \{11100\underline{0},1101\underline{0}0,10\underline{1}100,\underline{1}01010\}.
\end{alignedat}
\end{equation}
The Dyck path representation of these tuples is shown in Figure~\ref{fig:patterns}.
The next lemma asserts that these definitions indeed yield flippable tuples.

\begin{lemma}
\label{lem:flip-base}
Every pattern in\/~$\Phi$ defined by~\eqref{eq:patterns} is a flippable tuple.
\end{lemma}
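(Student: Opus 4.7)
The plan is to exhibit, for each pattern, an explicit witness flipping cycle in $G_k$ and to verify that (a) it is indeed a $2\ell$-cycle in $G_k$, and (b) exactly $\ell$ of its edges coincide with the prescribed edges $e(x_i,m_i)$ in $\ell$ distinct paths of $\cP_k$, one per path.

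For the three finite patterns $\beta,\gamma,\delta$ (living on $D_3,D_3,D_4$, respectively), I would carry out a routine finite check: for each marked Dyck word $(x,m)$ in the support, unfold the recursive formula~\eqref{eq:pi} to compute $\pi(x)$, locate the index $i$ with $a_i=m$, traverse the first $i$ edges of the path~$P(x)$ to identify $e(x,m)$, and then write down the candidate $6$- or $8$-cycle and inspect it. For $\beta$ and $\delta$ the three/four prescribed edges are already marked in Figure~\ref{fig:P3} by the dotted and solid frames, so once the witness cycle is written down the check is immediate.

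The substantive case is the infinite family $\alpha(w)$, parametrised by $w\in D$. Writing $\alpha(w)=\{(x_1,m_1),(x_2,m_2),(x_3,m_3)\}$ with $x_1=1w11000,\ x_2=1w10100,\ x_3=1w10010$ and $(m_1,m_2,m_3)=(|w|+5,|w|+6,|w|+2)$, and setting $k:=|w|/2+3$, I would first note the natural $1u0v$ decompositions with $(u,v)=(w1100,\epsilon),(w1010,\epsilon),(w10,10)$ for $i=1,2,3$. Using~\eqref{eq:pi} together with the easily verified identity $\revinv{wy}=\revinv{y}\,\revinv{w}$ (whenever $wy\in D$), I would unfold $\pi(x_i)$ and show that $m_1,m_2,m_3$ occur at indices $5,1,2$ of $\pi(x_1),\pi(x_2),\pi(x_3)$, respectively. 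Traversing the corresponding initial segments of $P(x_i)$ then yields
\[
e(x_1,m_1)=\{1w00101,1w00111\},\quad e(x_2,m_2)=\{1w10100,1w10101\},
\]
\[
e(x_3,m_3)=\{1w10110,1w00110\}.
\]
The six endpoints share the same bits at positions $1,\ldots,|w|+1,|w|+3,|w|+4$ (equal to $1,w,0,1$) and differ only at positions $|w|+2,|w|+5,|w|+6$. Traversed in the cyclic order $1w00101,1w00111,1w00110,1w10110,1w10100,1w10101$, they form a $6$-cycle in $G_k$: consecutive bitstrings differ in exactly one bit, and a short weight count shows each vertex lies in $B_k$. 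Three of its edges are the prescribed $e(x_i,m_i)$; the other three connect vertices lying on different paths of~$\cP_k$, so by the vertex-disjointness in Lemma~\ref{lem:paths} they cannot coincide with any edge of~$\cP_k$. Hence this cycle witnesses $\alpha(w)$.

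The main obstacle is careful bookkeeping in the unfolding of~\eqref{eq:pi} through the mirrored pieces $\revinv{w1100}$, $\revinv{w1010}$, $\revinv{w10}$. The key simplification is that each of these mirrored words further decomposes as $1\cdot u'\cdot 0\cdot v'$ with $u'\in\{\epsilon,10\}$ and $v'$ ending in $\revinv{w}$, so that $\pi(\revinv{w})$ appears in the final expansion of each $\pi(x_i)$ only as an additive shift on one contiguous block of indices, and this block never contains any of the marks~$m_i$. Consequently the marked positions always land at small fixed indices of $\pi(x_i)$, and the witness $6$-cycle lives in the same $3$-dimensional subcube (spanned by coordinates $|w|+2,|w|+5,|w|+6$) uniformly in~$w$.
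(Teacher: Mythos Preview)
Your approach is correct and essentially identical to the paper's: both exhibit the same explicit witness $6$-cycle $(1w00101,1w00111,1w00110,1w10110,1w10100,1w10101)$ for $\alpha(w)$ by unfolding~\eqref{eq:pi} to locate the edges $e(x_i,m_i)$ at the start of each path, and both defer $\beta,\gamma,\delta$ to a finite check. One small slip: you have the ambient sets of $\gamma$ and $\delta$ swapped---$\gamma$ is a triple on $D_4$ (its words have length~$8$) and $\delta$ is a quadruple on $D_3$, so $\gamma$ requires its own verification rather than being read off Figure~\ref{fig:P3}.
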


\noindent
Figure~\ref{fig:P3} shows three flipping cycles that witness the patterns~$\alpha(\epsilon)$, $\beta$, and~$\delta$.

We use Lemmas~\ref{lem:flip-closure} and~\ref{lem:flip-base} to construct a set~$\Psi$ of flippable tuples.
Namely, we define
\begin{equation}
\label{eq:Psi}
\Psi:=\{u\varphi v\mid\varphi\in\Phi,\:uv\in D,\:\text{and }|u|\text{ is even}\}\cup\{u\revinv{\varphi}v\mid\varphi\in\Phi,\:uv\in D,\:\text{and }|u|\text{ is odd}\}.
\end{equation}
By Lemmas~\ref{lem:flip-closure} and~\ref{lem:flip-base}, every marked tuple in~$\Psi$ is flippable.
Observe that the set of flippable tuples~$\Psi$ is already closed with respect to the operation described in Lemma~\ref{lem:flip-closure}, that is,
\begin{equation}
\label{eq:Psi-closed}
\Psi=\{u\tau v\mid\tau\in\Psi,\:uv\in D,\:\text{and }|u|\text{ is even}\}\cup\{u\revinv{\tau}v\mid\tau\in\Psi,\:uv\in D,\:\text{and }|u|\text{ is odd}\}.
\end{equation}
Next, for each~$k\geq 2$, we define a set~$\Psi_k$ by extracting only the flippable tuples on~$D_k$ from~$\Psi$:
\begin{equation}
\label{eq:Psi_k}
\Psi_2:=\emptyset, \hspace{3em} \Psi_k:=\{\tau\in\Psi\mid\tau\text{ is a flippable tuple on }D_k\} \quad\text{for }k\geq 3.
\end{equation}
Finally, we define a flippability hypergraph $\cH_k:=(D_k,\Psi_k)$ for~$k\geq 2$.

\begin{lemma}
\label{lem:conflict-free}
For every\/~$k\geq 3$, the set\/~$\Psi_k$ of flippable tuples defined by~\eqref{eq:Psi_k} has the property that for any\/ $\tau_1,\tau_2\in\Psi_k$, if\/ $\supp\tau_1\cap\supp\tau_2=\{x\}$ where\/~$x\in D_k$, then the mark of\/~$x$ in\/~$\tau_1$ is different from the mark of\/~$x$ in\/~$\tau_2$.
In particular, every spanning tree of the hypergraph\/~$\cH_k$ is conflict-free.
\end{lemma}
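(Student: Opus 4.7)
The plan is to unfold the definition of $\Psi_k$ in terms of the base patterns and their prepend, append, and mirror closure, and then show that whenever two distinct tuples share a support element $x$ with the same mark $m$, the data determining them forces an additional shared support element. By the closure property~\eqref{eq:Psi-closed}, every $\tau\in\Psi_k$ has the form $u\varphi v$ with $|u|$ even, or $u\revinv{\varphi}v$ with $|u|$ odd, for some pattern $\varphi\in\Phi$ and bitstrings $u,v$ with $uv\in D$. Any $(x,m)\in\tau$ accordingly factors as $u(y,n)v$ or $u\revinv{(y,n)}v$ for some marked element $(y,n)\in\varphi$, and the bits of $y$ in a fixed window around $n$ coincide with (resp.\ complement) the bits of $x$ in the corresponding window around $m$.

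Assume toward contradiction that distinct $\tau_1,\tau_2\in\Psi_k$ both contain $(x,m)$ and that $\supp\tau_1\cap\supp\tau_2=\{x\}$. The first observation is that the local bit-context of $x$ near position $m$ is tightly constrained by whichever pattern element $(y_i,n_i)$ realises $(x,m)$. Each pattern in $\Phi$ has an explicit short ``fingerprint'' of bits around each of its marks: the marks of $\alpha(w)$ lie at positions $|w|+2$, $|w|+5$, $|w|+6$ within a length-$(|w|+6)$ word whose final five bits are determined, while the marks of $\beta$, $\gamma$, $\delta$ all sit in fully fixed length-$6$ or length-$8$ words. Matching these fingerprints, together with the equation $m=|u_i|+n_i$ (or its mirrored variant), leaves only finitely many compatible pairs $(\varphi_1,y_1,n_1)$ vs.\ $(\varphi_2,y_2,n_2)$.

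For each compatible pair I would trace what the remaining support elements of $\tau_1$ and $\tau_2$ must be, by substituting the recovered $u_i,v_i$ into the other elements of the respective patterns, and exhibit a second Dyck word in $\supp\tau_1\cap\supp\tau_2$. The crucial phenomenon is that the base patterns themselves already overlap in multiple Dyck words: direct inspection gives $\supp\beta\cap\supp\delta=\{111000,101100,101010\}$ and $\supp\alpha(\epsilon)\cap\supp\delta=\{111000,110100\}$, with analogous multi-element overlaps arising after mirroring and shifting. In every configuration in which two distinct tuples yield the same $(x,m)$ with equal marks, these structural coincidences force at least one further Dyck word into $\supp\tau_1\cap\supp\tau_2$, contradicting the hypothesis that this intersection equals $\{x\}$.

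The main obstacle will be making the case enumeration airtight: with the four pattern families, the free parameter $w$ of $\alpha(w)$, the mirror operation, and the prepend/append flexibility, the a priori number of configurations is large. I expect that, after aligning the two factorizations via the common mark $m$ and exploiting the short fingerprints, the cases collapse to finitely many genuine overlaps, each traceable to one of the pattern coincidences above. The ``in particular'' clause is then immediate: by the inductive definition, every spanning tree $\cT$ of $\cH_k$ satisfies condition~\ref{item:conflict-free-i} of conflict-freeness automatically, and the main statement of the lemma applied to any two tuples in $\cT$ supplies condition~\ref{item:conflict-free-ii}.
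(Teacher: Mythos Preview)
Your overall strategy matches the paper's: assume both $\tau_i$ put the same mark $m$ on $x$, factor $\tau_i=u_i\varphi_i v_i$ (with $\varphi_i\in\Phi$ or $\varphi_i\in\revinv{\Phi}$), and derive a second element of $\supp\tau_1\cap\supp\tau_2$. However, the paper's execution rests on a sharper intermediate claim that you do not isolate: if two marked members of $\Phi\cup\revinv{\Phi}$, aligned at their marks, agree on the overlap of their windows inside $x$, then they are \emph{equal} as marked Dyck words. The paper verifies this by a single alignment check over all such members. Once $(y_1,n_1)=(y_2,n_2)$, it follows at once that $|u_1|=|u_2|$, hence $u_1=u_2$ and $v_1=v_2$; then the only ways two patterns can share a marked element are $\varphi_1=\varphi_2$ or $\{\varphi_1,\varphi_2\}=\{\beta,\delta\}$ (or the mirrored pair), and in either case $|\supp\varphi_1\cap\supp\varphi_2|\ge 3$, finishing the contradiction.

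Without this intermediate step your plan has a genuine weakness. You call it the ``crucial phenomenon'' that base patterns overlap in several Dyck words, citing $\supp\alpha(\epsilon)\cap\supp\delta=\{111000,110100\}$. But $\alpha(\epsilon)$ and $\delta$ share no \emph{marked} element (on $111000$ the marks sit at positions $5$ and $6$ respectively, and on $110100$ at $6$ and $5$), so this configuration never arises from the equal-mark hypothesis. More importantly, whenever a hypothetical compatible pair had $(y_1,n_1)\ne(y_2,n_2)$ you would have $|u_1|\ne|u_2|$, and then a pattern-level overlap between $\supp\varphi_1$ and $\supp\varphi_2$ does \emph{not} automatically lift to an overlap between $u_1(\supp\varphi_1)v_1$ and $u_2(\supp\varphi_2)v_2$. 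So the mechanism you invoke for producing a second shared support word is only valid once one already knows $u_1=u_2$ and $v_1=v_2$---which is precisely the content of the intermediate claim you skipped. If you carry out your enumeration carefully you will in fact rediscover that claim (no compatible pair with $(y_1,n_1)\ne(y_2,n_2)$ survives), but as written the argument is incomplete. The ``in particular'' clause is handled correctly.
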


In view of Lemmas~\ref{lem:reduction} and~\ref{lem:conflict-free}, it remains to prove that the hypergraph~$\cH_k$ has a spanning tree (many distinct spanning trees) to complete the proofs of Theorems~\ref{thm:odd} and~\ref{thm:count}.

\begin{lemma}
\label{lem:tree}
For every\/~$k\geq 3$, the hypergraph\/~$\cH_k$ has a spanning tree.
\end{lemma}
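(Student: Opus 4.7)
The plan is to establish Lemma~\ref{lem:tree} by strong induction on~$k$, with small values handled explicitly and the inductive step driven by the closure identity~\eqref{eq:Psi-closed}. For small~$k$---say $k\in\{3,4,5\}$---I would produce a spanning tree of~$\cH_k$ directly from the patterns in~$\Phi$ and a few of their short closures. For $k=3$ the pair $\{\alpha(\epsilon),\beta\}$ already spans~$\cH_3$, as indicated in Figure~\ref{fig:K73}, and an analogous (possibly computer-assisted) construction---bringing in $\gamma$, $\delta$ and short closures of $\alpha(w)$---handles $k=4$ and $k=5$.

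For the inductive step with $k\geq 6$ I would partition $D_k$ via the standard Catalan decomposition $x=1u0v$, obtaining $D_k=\bigsqcup_{i=1}^{k} D_k^{(i)}$, where $D_k^{(i)}:=\{1u0v\mid u\in D_{i-1},\,v\in D_{k-i}\}$ consists of those Dyck words whose first return to the abscissa occurs at position~$2i$. For indices~$i$ with $4\leq i\leq k-3$, the closure identity~\eqref{eq:Psi-closed} (using mirroring whenever the prepended bitstring has odd length) lifts spanning trees of the smaller hypergraphs $\cH_{i-1}$ and $\cH_{k-i}$ into flippable tuples in~$\Psi_k$ supported inside~$D_k^{(i)}$, out of which I would assemble a spanning tree of $\cH_k[D_k^{(i)}]$. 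The "boundary" blocks with $i\in\{1,2,3,k-2,k-1,k\}$ need separate treatment, because $\cH_0,\cH_1,\cH_2$ are too small to admit spanning trees; these blocks should not be given an internal spanning tree but rather be absorbed into adjacent blocks through carefully chosen bridging tuples.

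To stitch the block structure into a single spanning tree of~$\cH_k$, I would build a tree of bridging flippable tuples, each obtained from one of the non-$\alpha$ patterns $\beta$, $\gamma$, $\delta$ by prepending and appending suitable bitstrings with parity chosen according to~\eqref{eq:Psi-closed}. Because the supports of $\beta$, $\gamma$, and $\delta$ already contain Dyck words whose first returns to the abscissa occur at different positions, the closures $u\beta v$, $u\gamma v$, $u\delta v$ (and their mirrored variants) supply flippable tuples whose supports straddle several blocks $D_k^{(i)}$, which is exactly what the recursive definition of a spanning tree needs at its "root".

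The main obstacle will be the combinatorial bookkeeping required to force the recursive spanning-tree definition to succeed at every level: each chosen root hyperedge must meet every resulting sub-block in exactly one element of its support, and the recursion must scrupulously avoid sub-problems of size~$2$, which admit no spanning tree at all. Together with the treatment of the degenerate boundary blocks and the matching of parities in~\eqref{eq:Psi-closed}, I expect this verification to dominate the full argument that will be carried out in Section~\ref{sec:spanning}.
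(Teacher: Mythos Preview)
Your high-level plan---induct on $k$, decompose $D_k$ by the first return to the abscissa, lift smaller spanning trees via~\eqref{eq:Psi-closed}, and bridge the pieces with pattern-closures---is the same skeleton the paper uses. The implementation, however, differs in two places that matter.

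First, the paper does \emph{not} try to span each block $D_k^{(i)}$ separately and then bridge. Instead it strengthens the induction hypothesis: it fixes an auxiliary partition $D_k=E_k\cup F_k$ (for $k\ge 4$ simply $E_k=10D_{k-1}$ and $F_k=D_k\setminus 10D_{k-1}$) and simultaneously maintains spanning trees $\cT_k$, $\cE_k$, $\cF_k$ of $\cH_k$, $\cH_k[E_k]$, $\cH_k[F_k]$. The set $F_k$ is then built up layer by layer, $F_{k,j}=\bigcup_{i=2}^j D_k^{(i)}$, via an inner induction on~$j$. This $E/F$-split is precisely the device that dissolves the ``avoid size-$2$ sub-problems'' obstacle you flagged: whenever a new layer $1\revinv{D_{j-1}}0v$ is attached, it is split as $1\revinv{E_{j-1}}0v\cup 1\revinv{F_{j-1}}0v$, and each half already carries a spanning tree by induction. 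Your plan to ``absorb'' the boundary blocks into neighbours does not obviously scale, since blocks like $D_k^{(1)}=10D_{k-1}$ are Catalan-sized.

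Second, the bridging roles are the reverse of what you propose. The paper's workhorse bridge is the $\alpha$-family: for every layer step and every $v\in D_{k-j}$ it uses $\alpha((10)^{j-3})v$, whose three supports land one each in $F_{k,j-1}$, $1\revinv{E_{j-1}}0v$, and $1\revinv{F_{j-1}}0v$. The patterns $\beta,\gamma,\delta$ appear only sparingly---$\beta(10)^{k-3}$ once, to glue $F_k$ with $10E_{k-1}$ and $10F_{k-1}$ into $\cT_k$; $\gamma$ and $\delta$ only in the hand-built base $\cF_4$ and $\cE_3$. Restricting inter-block bridges to $\beta,\gamma,\delta$ as you suggest would force you to reinvent something like the $E/F$-split anyway, because their closures land two support elements in the same first-return block, so a single such bridge never separates the blocks cleanly.
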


\begin{lemma}
\label{lem:tree-count}
For every\/~$k\geq 6$, the hypergraph\/~$\cH_k$ has at least\/~$2^{2^{k-6}}$ distinct spanning trees.
\end{lemma}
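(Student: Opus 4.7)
My plan is to prove Lemma \ref{lem:tree-count} by induction on $k$, establishing the multiplicative recurrence $S_k \ge S_{k-1}^2$ for $k \ge 7$ together with the base case $S_6 \ge 2$, where $S_k$ denotes the number of spanning trees of $\cH_k$. These two facts immediately yield $S_k \ge 2^{2^{k-6}}$: if $S_{k-1} \ge 2^{2^{k-7}}$, then $S_k \ge (2^{2^{k-7}})^2 = 2^{2^{k-6}}$. The base case would be handled by starting from the spanning tree produced by Lemma \ref{lem:tree} for $k=6$ and locally substituting a single flippable tuple by another tuple in $\Psi_6$ that plays the same joining role, for instance, swapping a $\delta$- or $\beta$-based tuple for one involving $\gamma$ or $\alpha(w)$. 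The variety of patterns in $\Phi$, combined with the closure of $\Psi$ under prepending, appending, and mirroring, ensures at least one such substitution exists.

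For the inductive step, I would exploit the closure identity~\eqref{eq:Psi-closed} to embed two vertex-disjoint copies of $\cH_{k-1}$ inside $\cH_k$. Consider the subsets
\[X_1 := \{10y \mid y \in D_{k-1}\} \quad\text{and}\quad X_2 := \{1\revinv{y}0 \mid y \in D_{k-1}\}\]
of $D_k$. These are disjoint for $k \ge 2$, since words in $X_1$ begin with $10$, while $X_2$ consists of prime Dyck words beginning with $11$. Applying~\eqref{eq:Psi-closed} with $u=10$, $v=\epsilon$ (so $|u|$ is even) and with $u=1$, $v=0$ (so $|u|$ is odd) shows that the maps $\tau \mapsto 10\tau$ and $\tau \mapsto 1\revinv{\tau}0$ send $\Psi_{k-1}$ into $\Psi_k$ with images supported entirely in $X_1$ and $X_2$, respectively. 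A routine analysis of~\eqref{eq:Psi} shows that every flippable tuple in $\Psi_k$ supported in $X_i$ arises uniquely this way, so each map is in fact a bijection onto its image, and consequently each carries spanning trees of $\cH_{k-1}$ bijectively onto spanning trees of $\cH_k[X_i]$.

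Given spanning trees $\cT_1, \cT_2$ of $\cH_{k-1}$, I would assemble a spanning tree of $\cH_k$ as $\cT := \cT_1^{\uparrow} \cup \cT_2^{\uparrow} \cup \cB$, where $\cT_i^{\uparrow}$ is the lift of $\cT_i$ to $X_i$ and $\cB$ is a fixed \emph{backbone} consisting of flippable tuples of $\Psi_k$ forming a spanning tree of the induced subhypergraph $\cH_k[Y]$, with $Y := \bigl(D_k \setminus (X_1 \cup X_2)\bigr) \cup \{x_1^*, x_2^*\}$ and designated anchor vertices $x_i^* \in X_i$, such that $\cB$ meets $X_i$ only at $x_i^*$. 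The recursive definition of a spanning tree is then satisfied by $\cT$: the flippable tuple of $\cB$ containing $x_1^*$ partitions $D_k$ with one part containing $x_1^*$, which after augmentation by $X_1 \setminus \{x_1^*\}$ is spanned by $\cT_1^{\uparrow}$, and analogously for $x_2^*$. Since $\cT_1$ and $\cT_2$ can be uniquely recovered from $\cT$ by restricting to tuples supported in $X_1$ and $X_2$ and inverting the two lifting maps, distinct pairs $(\cT_1, \cT_2)$ yield distinct spanning trees of $\cH_k$, which gives $S_k \ge S_{k-1}^2$.

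The main obstacle is the construction of the backbone $\cB$: it must span the non-prime Dyck words in $D_k$ that begin with $11$, together with the two anchor vertices, and its bridging tuples must intersect $X_1$ and $X_2$ in exactly one vertex each. Such a construction is essentially a variant of Lemma \ref{lem:tree} applied to the restricted vertex set $Y$; the anchors can be chosen to be canonical Dyck words such as $x_1^* := 10 \cdot 1^{k-1} 0^{k-1} \in X_1$ and $x_2^* := 1^k 0^k \in X_2$, whose neighbourhoods in $\cH_k$ are easy to exhibit through appropriate prepend/append versions of the patterns $\alpha(w)$, $\beta$, $\gamma$, $\delta$. Together with the explicit substitution argument for the base case $k = 6$, this will complete the induction; neither task presents a conceptual barrier beyond what is already required to prove Lemma \ref{lem:tree}.
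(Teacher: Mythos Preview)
Your recursive scheme $S_k\ge S_{k-1}^2$ is a different route from the paper's, and it could in principle work, but as written it has a genuine gap: the two pieces you defer---the backbone $\cB$ and the base case $S_6\ge 2$---are not side conditions but the entire content of the argument, and neither is actually carried out.

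For the backbone, you need a spanning tree of $\cH_k[Y]$ that meets each $X_i$ in exactly one vertex. The set $D_k\setminus(X_1\cup X_2)$ coincides with $F_{k,k-1}$ in the paper's notation, and Lemma~\ref{lem:tree} does build a spanning tree $\cF_{k,k-1}$ on it; but attaching a single anchor in $X_2=1D_{k-1}0$ is delicate. For instance, your proposed anchor $x_2^*=1^k0^k$ lies in $\supp(1^{k-3}\beta 0^{k-3})$ or $\supp(1^{k-3}\revinv{\beta}0^{k-3})$ (depending on parity), but the other two members of that tuple are also prime Dyck words, hence also in $X_2$, so this tuple cannot serve as the unique bridge. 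More generally, the tuples in $\Psi_k$ that reach into $X_2$ tend to have several members there (as in the paper's own step from $\cF_{k,k-1}$ to $\cF_{k,k}$, which uses $\alpha((10)^{k-3})$ with two members in $X_2$ and then recurses inside $X_2$). Exhibiting a tuple with exactly one foot in $X_2$ and the rest in $F_{k,k-1}\cup\{x_1^*\}$ is a concrete combinatorial search you have not performed, and it is not obvious that it succeeds for your chosen anchors. The base case is similarly unproved: ``the variety of patterns\dots ensures at least one such substitution exists'' is an assertion, not a verification.

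By contrast, the paper avoids all of this. It reuses the explicit construction of Lemma~\ref{lem:tree} and observes that at one specific level---the passage from $\cF_{k,4}$ to $\cF_{k,5}$---there is, for each $v\in D_{k-5}$, an independent binary choice: one may partition $D_4$ either as $(E_4,F_4)$ or as an alternative pair $(E'_4,F'_4)$ (with its own explicit spanning trees and connecting triple $\alpha(1100)v$ in place of $\alpha(1010)v$). This yields $2^{|D_{k-5}|}\ge 2^{2^{k-6}}$ spanning trees directly, with no induction on $k$ and no backbone to build. If you want to salvage your approach, you must actually produce $\cB$ and the second tree for $k=6$; otherwise, the paper's direct argument is both shorter and complete.
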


\begin{proof}[Proof of Theorem~\ref{thm:odd}]
Combine Lemma~\ref{lem:reduction}, Lemma~\ref{lem:conflict-free}, and Lemma~\ref{lem:tree}. 
\end{proof}

\begin{proof}[Proof of Theorem~\ref{thm:sparse}]
Combine Theorem~\ref{thm:odd} and~\cite[Theorem~1]{MR2836824}.
\end{proof}

\begin{proof}[Proof of Theorem~\ref{thm:count}]
Combine Lemma~\ref{lem:reduction}, Lemma~\ref{lem:conflict-free}, and Lemma~\ref{lem:tree-count}. 
\end{proof}

\section{Proofs of Lemmas~\ref{lem:paths}, \ref{lem:flip-closure}, \ref{lem:flip-base}, and~\ref{lem:conflict-free}}
\label{sec:flip-lemmas}

\begin{proof}[Proof of Lemma~\ref{lem:paths}]
We extend the mirroring notation to arbitrary bitstrings: for a bitstring $w=b_1b_2\cdots b_{2k}$, where $b_1,\ldots,b_{2k}\in\{0,1\}$, we define $\revinv{w}:=\ol{b_{2k}b_{2k-1}\cdots b_1}$.

We have $P(\epsilon)=(\epsilon)$ by definition.
When~$k\geq 1$ and~$x\in D_k$, the path $P(x)=(x_0,x_1,\ldots,x_{2k})$ can be described recursively as follows.
Let~$x=1u0v$ be the unique decomposition of~$x$ with~$u,v\in D$ that is used in~\eqref{eq:pi}.
Let $\ell=\frac{1}{2}|u|+1$, so that~$u\in D_{\ell-1}$ and~$v\in D_{k-\ell}$.
Then we have
\begin{equation}
\label{eq:paths}
\begin{alignedat}{3}
x_i&=1u0v &\quad &\text{for }i=0, \\
x_i&=1u_{i-1}1v &\quad &\text{for }i\in\{1,\ldots,2\ell-1\}, &\quad &\text{where }P(\revinv{u})=(\revinv{u_0},\revinv{u_1},\ldots,\revinv{u_{2\ell-2}}), \\
x_i&=0\ol{u}1v_{i-2\ell} &\quad &\text{for }i\in\{2\ell,\ldots,2k\}, &\quad &\text{where }P(v)=(v_0,v_1,\ldots,v_{2k-2\ell}).
\end{alignedat}
\end{equation}
Conversely, for every bitstring $y\in B_k$ with $k\geq 1$, exactly one of the following three cases holds:
\begin{enumerate}
\item\label{item:path-start} $y$ has a unique decomposition $y=1u0v$ with $u\in D_{\ell-1}$ and $v\in D_{k-\ell}$ where $\ell\in[k]$;
\item\label{item:path-rec-u} $y$ has a unique decomposition $y=1w1v$ with $\revinv{w}\in B_{\ell-1}$ and $v\in D_{k-\ell}$ where $\ell\in[k]$;
\item\label{item:path-rec-v} $y$ has a unique decomposition $y=0\ol{u}1w$ with $u\in D_{\ell-1}$ and $w\in B_{k-\ell}$ where $\ell\in[k]$.
\end{enumerate}

We prove that for every bitstring~$y\in B_k$ with~$k\geq 0$, there is exactly one pair~$(x,i)$ such that~$x\in D_k$, $i\in\{0,\ldots,2k\}$, and~$y=x_i$ given that $P(x)=(x_0,x_1,\ldots,x_{2k})$.
The proof goes by induction on~$k$.
The base case~$k=0$ trivial.
For the induction step, let~$y\in B_k$ with~$k\geq 1$, and suppose the statement holds for all bitstrings in $B_0\cup\cdots\cup B_{k-1}$.

We consider the three cases of the decomposition of~$y$ described in~\ref{item:path-start}--\ref{item:path-rec-v}.
In case~\ref{item:path-start}, it follows from~\eqref{eq:paths} that~$(1u0v,0)$ is the unique pair~$(x,i)$ with the required properties.
In case~\ref{item:path-rec-u}, we apply the induction hypothesis to~$\revinv{w}$ to get a unique pair~$(u,j)$ such that~$u\in D_{\ell-1}$, $j\in\{0,\ldots,2\ell-2\}$, and~$\revinv{w}=\revinv{u_j}$ (that is, $w=u_j$) given that $P(\revinv{u})=(\revinv{u_0},\revinv{u_1},\ldots,\revinv{u_{2\ell-2}})$; it follows from~\eqref{eq:paths} that~$(1u0v,j+1)$ is the unique pair~$(x,i)$ with the required properties.
Finally, in case~\ref{item:path-rec-v}, we apply the induction hypothesis to~$w$ to get a unique pair~$(v,j)$ such that~$v\in D_{k-\ell}$, $j\in\{0,\ldots,2k-2\ell\}$, and~$w=v_j$ given that $P(v)=(v_0,v_1,\ldots,v_{2k-2\ell})$; it follows from~\eqref{eq:paths} that~$(1u0v,2\ell+j)$ is the unique pair~$(x,i)$ with the required properties.
\end{proof}

\begin{proof}[Proof of Lemma~\ref{lem:flip-closure}]
We will prove the following three special cases of the statements claimed in the lemma: if $\tau$ is a flippable tuple, then
\begin{enumerate}
\setcounter{enumi}{2}
\item\label{item:flip-closure-iii} $u\tau$ is a flippable tuple for every~$u\in D$,
\item\label{item:flip-closure-iv} $\tau v$ is a flippable tuple for every~$v\in D$,
\item\label{item:flip-closure-v} $1\revinv{\tau}0$ is a flippable tuple.
\end{enumerate}
Statements \ref{item:flip-closure-i} and~\ref{item:flip-closure-ii} then follow by straightforward induction, because the operations described therein can be obtained by repeated application of the operations described in \ref{item:flip-closure-iii}--\ref{item:flip-closure-v}.

We will need the following simple observation.

\begin{claim}
For any $x,y\in D$, we have $\pi(xy)=\bigl(\pi(x),\:|x|+\pi(y)\bigr)$.
\end{claim}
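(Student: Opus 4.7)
The plan is to prove the claim by induction on the length of $x$. The base case $x=\epsilon$ is immediate: $\pi(\epsilon y) = \pi(y)$ on the one hand, and on the other hand $\bigl(\pi(\epsilon),\, |\epsilon|+\pi(y)\bigr) = \bigl((),\, 0+\pi(y)\bigr) = \pi(y)$, so both sides agree. This handles the starting point with nothing more than unwinding notation.

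For the inductive step, I would suppose $x \neq \epsilon$ and invoke the unique decomposition $x = 1u0v$ with $u,v \in D$ used in~\eqref{eq:pi}. The key structural observation is that $vy$ is again a Dyck word (the prefix condition is preserved by concatenating two Dyck words), so $xy = 1u0(vy)$ is itself a decomposition of the form required by~\eqref{eq:pi}. Applying~\eqref{eq:pi} to $xy$ therefore yields
\begin{equation*}
\pi(xy) = \bigl(|u|+2,\,(|u|+2)-\pi(\revinv{u}),\,1,\,(|u|+2)+\pi(vy)\bigr).
\end{equation*}
Now the inductive hypothesis applies to the pair $(v, y)$, since $|v| < |x|$, giving $\pi(vy) = \bigl(\pi(v),\,|v|+\pi(y)\bigr)$. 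Substituting this in and using $|x| = |u|+|v|+2$, the last block $(|u|+2)+\pi(vy)$ splits as $\bigl((|u|+2)+\pi(v),\,|x|+\pi(y)\bigr)$, and the first four blocks reassemble (by~\eqref{eq:pi} applied to $x$ itself) into exactly $\pi(x)$. This yields $\pi(xy) = \bigl(\pi(x),\,|x|+\pi(y)\bigr)$, completing the induction.

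I do not anticipate a serious obstacle: the proof is essentially bookkeeping on the recursive definition~\eqref{eq:pi}, and the only content beyond unwinding is the trivial fact that the concatenation of two Dyck words is a Dyck word, which is what allows the recursive rule to be applied to $xy$ in the same way it is applied to $x$. The mild subtlety is to apply the inductive hypothesis to $v$ (which has strictly smaller length than $x$) rather than to $x$ itself, and to be careful that the index shift $|u|+2+|v|$ arising from the recursion matches $|x|$ exactly.
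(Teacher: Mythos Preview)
Your proposal is correct and matches the paper's own proof essentially line for line: induction on $|x|$, trivial base case $x=\epsilon$, and in the inductive step the decomposition $x=1u0v$ together with the induction hypothesis applied to the pair $(v,y)$, followed by the bookkeeping identity $|u|+2+|v|=|x|$ to reassemble $\pi(x)$.
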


The claim is proved by induction on~$|x|$.
If $x=\epsilon$, then the claim holds trivially.
Otherwise, let $x=1u0v$ be the unique decomposition of~$x$ such that $u,v\in D$.
By~\eqref{eq:pi} and by the induction hypothesis applied to~$vy$, we have
\begin{align*}
\pi(1u0vy) &= \bigl(|u|+2,\:(|u|+2)-\pi(\revinv{u}),\:1,\:(|u|+2)+\pi(vy)\bigr)\\
&= \bigl(|u|+2,\:(|u|+2)-\pi(\revinv{u}),\:1,\:(|u|+2)+\pi(v),\:(|u|+2+|v|)+\pi(y)\bigr)\\
&=\bigl(\pi(1u0v),\:(|u|+2+|v|)+\pi(y)\bigr),
\end{align*}
which proves the claim.

Let $\tau=\{(x_1,m_1),\ldots,(x_\ell,m_\ell)\}$ be a flippable tuple in~$D_k$, where $k\geq 1$, and let $C=(y_1,\ldots,y_{2\ell})$ be a flipping cycle of length~$2\ell$ that witnesses~$\tau$.

For the proof of~\ref{item:flip-closure-iii}, let $u\in D$.
The claim implies that for each~$i\in[\ell]$, the final part of the path~$P(ux_i)$ looks as follows:
\[P(ux_i)=(\ldots,\ol{u}z_0,\ol{u}z_1,\ldots,\ol{u}z_{2k}),\quad\text{where }P(x_i)=(z_0,z_1,\ldots,z_{2k}).\]
Therefore, $(\ol{u}y_1,\ldots,\ol{u}y_{2\ell})$ is a flipping cycle that witnesses~$u\tau$.
This proves~\ref{item:flip-closure-iii}.

For the proof of~\ref{item:flip-closure-iv}, let $v\in D$.
The claim implies that for each~$i\in[\ell]$, the initial part of the path~$P(x_iv)$ looks as follows:
\[P(x_iv)=(z_0v,z_1v,\ldots,z_{2k}v,\ldots),\quad\text{where }P(x_i)=(z_0,z_1,\ldots,z_{2k}).\]
Therefore, $(y_1v,\ldots,y_{2\ell}v)$ is a flipping cycle that witnesses~$\tau v$.
This proves~\ref{item:flip-closure-iv}.

Finally, we prove~\ref{item:flip-closure-v}.
For each~$i\in[\ell]$, by~\eqref{eq:pi}, we have $\pi(1x_i0)=\bigl(2k+2,\:(2k+2)-\pi(\revinv{x_i}),\:1\bigr)$, which implies that
\[P(1x_i0)=(1x_i0,1z_01v,1z_11v,\ldots,1z_{2k}1v,0\ol{x_i}1),\quad\text{where }P(\revinv{x_i})=(\revinv{z_0},\revinv{z_1},\ldots,\revinv{z_{2k}}).\]
Therefore, $(1\revinv{y_1}1,\ldots,1\revinv{y_{2\ell}}1)$ is a flipping cycle that witnesses $1\revinv{\tau}0$.
This proves~\ref{item:flip-closure-v}.
\end{proof}

\begin{proof}[Proof of Lemma~\ref{lem:flip-base}]
Consider the sequences
\begin{align}
\label{eq:flipping-cycles}
\begin{split}
C_{\alpha(w)} &:= (1w00101,1w00111,1w00110,1w10110,1w10100,1w10101) \quad \text{for }w\in D, \\
C_{\beta}     &:= (111000,111001,011001,011011,011010,111010), \\
C_{\gamma}    &:= (11011100,10011100,10011101,10011001,11011001,11011000), \\
C_{\delta}    &:= (111000,111001,110001,110011,010011,011011,011010,111010).
\end{split}
\end{align}
It is easy to verify that each of these sequences is a cycle in~$G_k$ for the appropriate value of~$k$.

We claim that for each pattern~$\varphi \in \Phi$, the cycle~$C_\varphi$ from~\eqref{eq:flipping-cycles} is a flipping cycle that witnesses~$\varphi$.
For $\varphi \in \{\beta,\delta\}$ this can be verified directly from Figure~\ref{fig:P3}, as follows.
The bitstrings on the cycles~$C_\beta$ and~$C_\delta$ are indicated in the figure by dotted and solid frames, respectively.
Both cycles have exactly one edge in common with each of the paths that start at the respective vertices in the tuples~$\beta$ and~$\delta$.
Furthermore, the bits flipped along the indicated edges are precisely those that are marked in~$\beta$ and~$\delta$.

In the same way, the claim can be verified for~$\varphi=\alpha(w)$ when~$w=\epsilon$.
The bitstrings on the cycle~$C_{\alpha(\epsilon)}$ are indicated in Figure~\ref{fig:P3} by dashed frames.
For general~$w\in D$, by the definition~\eqref{eq:pi}, the initial parts of the paths in~$\cP_{3+|w|/2}$ that start at the members of~$\alpha(w)$ look as follows:
\begin{equation}
\label{eq:alpha-paths}
\begin{alignedat}{3}
P(1w11000) &= (1w11000,1w11001,1w01001,1w01101,1w00101,1w00111,\ldots), \\
P(1w10100) &= (1w10100,1w10101,\ldots), \\
P(1w10010) &= (1w10010,1w10110,1w00110,\ldots).
\end{alignedat}
\end{equation}
For~$w=\epsilon$, these paths are exactly the same as~$P(x_1)$, $P(x_2)$, and~$P(x_3)$ in Figure~\ref{fig:P3}.
The $6$-cycle~$C_{\alpha(w)}$ defined by~\eqref{eq:flipping-cycles} intersects every path from~\eqref{eq:alpha-paths} exactly at the last edge explicitly shown in~\eqref{eq:alpha-paths}.
Furthermore, the bits flipped along the intersection edges are exactly the marked bits of the members of~$\alpha(w)$ as defined by~\eqref{eq:patterns}.

Finally, we consider the case~$\varphi=\gamma$.
The initial parts of the paths in~$\cP_4$ that start at the members of~$\gamma$ look as follows:
\begin{equation}
\label{eq:gamma-paths}
\begin{alignedat}{3}
P(11001100) &= (11001100,11011100,10011100,\ldots), \\
P(11011000) &= (11011000,11011001,\ldots), \\
P(11101000) &= (11101000,11101001,10101001,10111001,10011001,10011101,\ldots).
\end{alignedat}
\end{equation}
The cycle~$C_\gamma$ defined by~\eqref{eq:flipping-cycles} intersects every path from~\eqref{eq:gamma-paths} exactly at the last edge explicitly shown in~\eqref{eq:gamma-paths}, and the bits flipped along the intersection edges are exactly the marked bits of the members of~$\gamma$ as defined by~\eqref{eq:patterns}.
\end{proof}

\begin{figure}[p]
\includegraphics[scale=0.916]{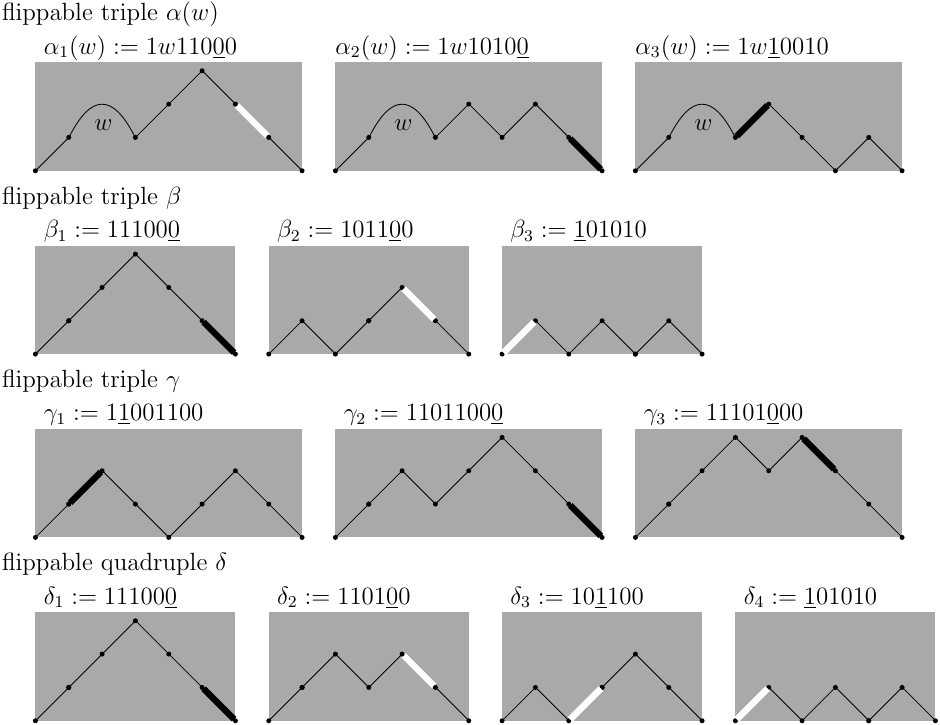}
\caption{Dyck path representation of the patterns~$\Phi$ defined by~\eqref{eq:patterns}.
The steps of the Dyck paths that represent the marked bits are highlighted white at odd positions and black at even positions.}
\label{fig:patterns}
\end{figure}

\begin{figure}[p]
\includegraphics[scale=0.916]{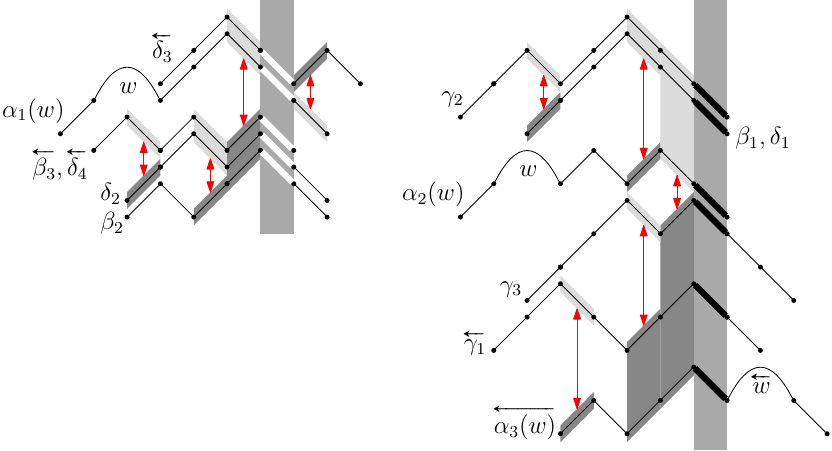}
\caption{Illustration for the proof of Lemma~\ref{lem:conflict-free}.
The double arrows indicate mismatches between the Dyck paths aligned at the marked down-steps.}
\label{fig:conflict}
\end{figure}

\begin{proof}[Proof of Lemma~\ref{lem:conflict-free}]
Let $\revinv{\Phi}:=\{\revinv{\varphi}\mid\varphi\in\Phi\}$.
Let~$\tau_1$ and~$\tau_2$ be flippable tuples in~$\Psi_k$ such that $\supp\tau_1\cap\supp\tau_2=\{x\}$, where~$x\in D_k$.
Let~$i\in\{1,2\}$.
By~\eqref{eq:Psi}, we have $\tau_i=u_i\varphi_iv_i$, where~$u_i$ and~$v_i$ are some bitstrings such that~$u_iv_i\in D$ and
\begin{equation}
\label{eq:windows}
\varphi_i\in\Phi\quad\text{if }|u_i|\text{ is even}, \hspace{4em} \varphi_i\in\revinv{\Phi}\quad\text{if }|u_i|\text{ is odd}.
\end{equation}
Since~$x\in\supp\tau_i$, we have~$x=u_ix_iv_i$ for some Dyck word~$x_i\in\supp\varphi_i$.
Let~$m_i$ be the mark of~$x_i$ in~$\varphi_i$.
It follows that~$|u_i|+m_i$ is the mark of~$x$ in~$\tau_i$.

Suppose for the sake of contradiction that the mark of~$x$ in~$\tau_1$ is the same as the mark of~$x$ in~$\tau_2$, that is, $|u_1|+m_1=|u_2|+m_2=:m$.
Let $p:=\max(1-m_1,1-m_2)$ and $q:=\min(|x_1|-m_1,|x_2|-m_2)$.
Thus we have $p\leq 0\leq q$.
The fact that $u_1x_1v_1=u_2x_2v_2$ and $|u_1|+m_1=|u_2|+m_2$ implies that for each $i\in\{p,p+1,\ldots,q\}$, the $(m_1+i)$th bit of~$x_1$ is equal to the $(m_2+i)$th bit of~$x_2$.
We claim that this is possible only when $(x_1,m_1)=(x_2,m_2)$.
The proof of this claim involves consideration of all possible cases of~$\varphi_1$ and~$\varphi_2$ satisfying~\eqref{eq:windows} and all possible cases of marked Dyck words $(x_1,m_1)\in\varphi_1$ and $(x_2,m_2)\in\varphi_2$.
To avoid tedious case distinctions, we propose a visual argument using the Dyck path representation of~$x$.
Figure~\ref{fig:patterns} presents the Dyck paths of the members of the patterns in~$\Phi$ in which the steps representing the marked bits have been marked white or black according to the following \emph{marking rule}: steps at odd positions are marked white, and steps at even positions are marked black.

Consider the Dyck path~$x$ in which the $m$th step has been marked white or black according to this rule.
Let~$i\in\{1,2\}$.
The common part of such a marked Dyck path~$x$ with the infinite vertical strip $[|u_i|,|u_ix_i|]\times\mathbb{R}$ is a translated copy of the Dyck path~$x_i$ in which the $m_i$th step has been marked according to the marking rule above.
That is, it has the form of one of the marked Dyck paths in Figure~\ref{fig:patterns} if~$|u_i|$ is even or the form of the mirror image of one of the marked Dyck paths in Figure~\ref{fig:patterns} if~$|u_i|$ is odd.
Note that the mirroring does \emph{not} change the mark colors---even though the parity of the relative position of the mark from the left within the Dyck path changes, this is compensated by the opposite parity of~$|u_i|$.
The two translated marked Dyck paths~$x_1$ and~$x_2$ must coincide on the common part of the two vertical strips.

Figure~\ref{fig:conflict} presents every Dyck path from Figure~\ref{fig:patterns} or its mirror image, so that the marked step is a down-step.
It shows them aligned horizontally with respect to the marked down-steps, separately for each color.
It can be checked in the figure that no two of these marked Dyck paths can coincide on the common part of the two vertical strips considered in the argument above, unless they are the same marked Dyck path.
Specifically, a mismatch between any two distinct Dyck paths is indicated by a double arrow in the figure.
The situation when the marked step is an up-step is analogous, by symmetry.

We have argued that $(x_1,m_1)=(x_2,m_2)$.
This is possible only when $\{\varphi_1,\varphi_2\}=\{\beta,\delta\}$, $\{\varphi_1,\varphi_2\}=\{\revinv{\beta},\revinv{\delta}\}$, or otherwise $\varphi_1=\varphi_2$.
In any case, we have $\lvert\supp\varphi_1\cap\supp\varphi_2\rvert\geq 3$.
The assumption that $|u_1|+m_1=|u_2|+m_2$ implies $|u_1|=|u_2|$, which implies~$u_1=u_2$ and~$v_1=v_2$.
Therefore, for each $y\in\supp\varphi_1\cap\supp\varphi_2$, we have $u_1yv_1=u_2yv_2\in\supp u_1\varphi_1v_1\cap\supp u_2\varphi_2v_2=\supp\tau_1\cap\supp\tau_2$.
It follows that $\lvert\supp\tau_1\cap\supp\tau_2\rvert\geq 3$, which contradicts the assumption that $\supp\tau_1\cap\supp\tau_2=\{x\}$.

The second statement of Lemma~\ref{lem:conflict-free} is an immediate consequence of the first statement and the definition of a conflict-free spanning tree.
\end{proof}

\section{Proofs of Lemmas~\ref{lem:tree} and~\ref{lem:tree-count}}
\label{sec:spanning}

Before proceeding with the proofs, we generalize the notation~\eqref{eq:append-mirror-x} and~\eqref{eq:append-mirror-tau}.
For every~$k\geq 2$ and every set~$X\subseteq D_k$, we define
\begin{align*}
uXv        &:= \{uxv\mid x\in X\} \quad \text{for any bitstrings }u\text{ and }v\text{ such that }uv\in D,\\
\revinv{X} &:= \{\revinv{x}\mid x\in X\}.
\end{align*}
Similarly, for every~$k\geq 2$ and every set~$\cX$ of flippable tuples on~$D_k$, we define
\begin{align*}
u\cX v       &:= \{u\tau v\mid\tau\in\cX\} \quad \text{for any bitstrings }u\text{ and }v\text{ such that }uv\in D,\\
\revinv{\cX} &:= \{\revinv{\tau}\mid\tau\in\cX\}.
\end{align*}
As a direct consequence of the definitions above and~\eqref{eq:Psi-closed}, if~$X\subseteq D_k$ and~$\cX$ is a spanning tree of~$\cH_k[X]$, then the following holds for any bitstrings~$u$ and~$v$ such that~$uv\in D$:
\begin{enumerate}
\item if~$|u|$ is even, then~$u\cX v$ is a spanning tree of $\cH_{k+|uv|/2}[uXv]$;
\item if~$|u|$ is odd, then~$u\revinv{\cX}v$ is a spanning tree of $\cH_{k+|uv|/2}[u\revinv{X}v]$.
\end{enumerate}
We will use this property extensively in the proofs below.

\begin{figure}[t]
\includegraphics[scale=0.91]{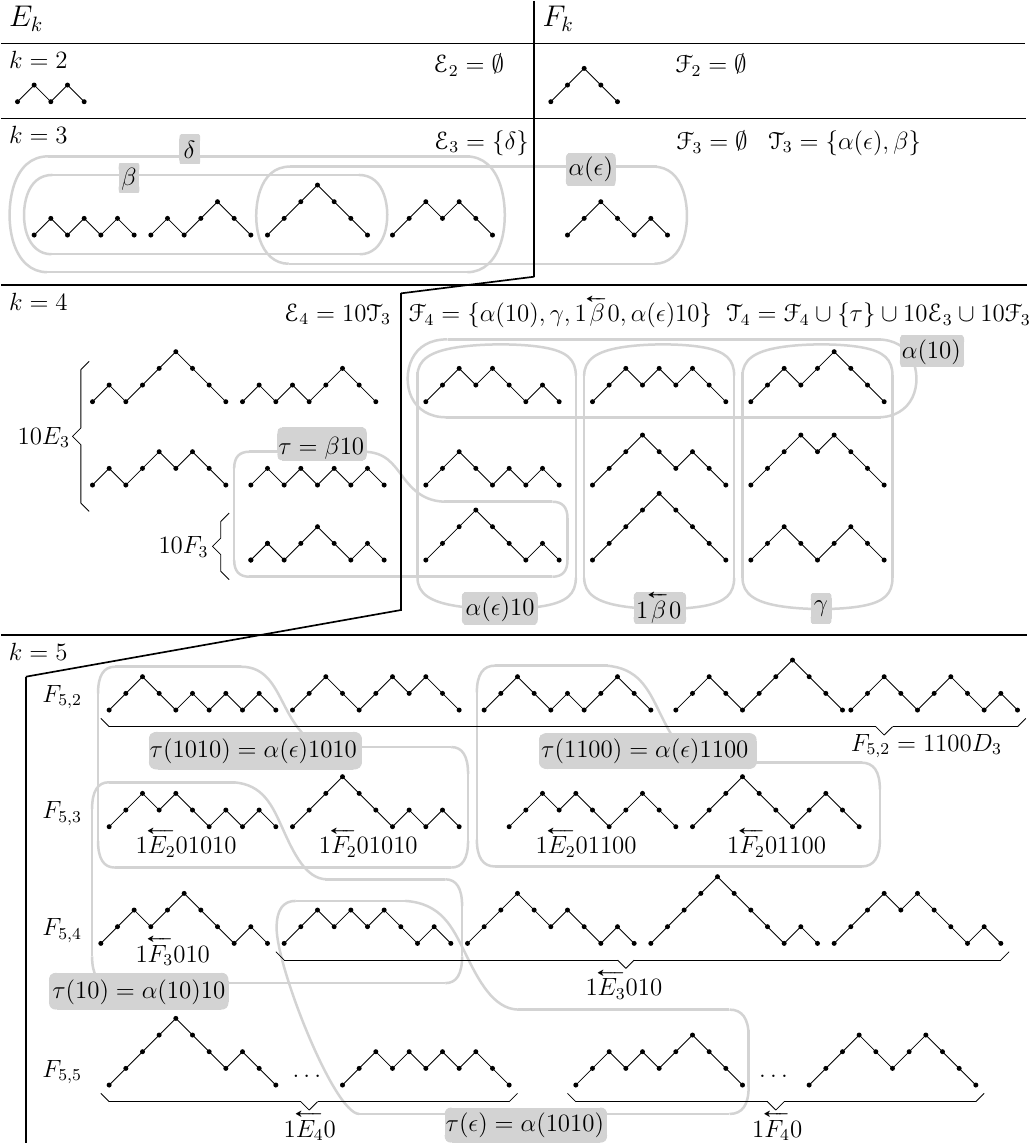}
\caption{Illustration of the proof of Lemma~\ref{lem:tree}.
The inductive construction of~$\cT_k$ is shown for~$k=4$, and the inductive construction of~$\cF_k$ is shown for~$k=5$.}
\label{fig:tree}
\end{figure}

\begin{proof}[Proof of Lemma~\ref{lem:tree}]
For the reader's convenience, this proof is illustrated in Figure~\ref{fig:tree}.

For~$k\geq 2$, we partition the set of Dyck words~$D_k$ into two sets~$E_k$ and~$F_k$ as follows:
\begin{equation}
\label{eq:E/F-definition}
\begin{alignedat}{4}
E_2 &:= \{1010\}, \hspace{2.5em} & E_3 &:= D_3\setminus\{110010\}, \hspace{2.5em} & E_k &:= 10D_{k-1}              \quad &&\text{for } k\geq 4,\\
F_2 &:= \{1100\}, \hspace{2.5em} & F_3 &:= \{110010\},             \hspace{2.5em} & F_k &:= D_k\setminus 10D_{k-1} \quad &&\text{for } k\geq 4.
\end{alignedat}
\end{equation}
In particular, we have the following, for~$k\geq 2$:
\begin{equation}
\label{eq:E/F-members}
1010(10)^{k-2}\in E_k, \hspace{4em} 1100(10)^{k-2}\in F_k.
\end{equation}

We prove the following more general statement, which directly implies the lemma:

\begin{claim}
There are a spanning tree~$\cT_k$ of~$\cH_k$ for~$k\geq 3$, a spanning tree~$\cE_k$ of~$\cH_k[E_k]$ for~$k\geq 2$, and a spanning tree~$\cF_k$ of~$\cH_k[F_k]$ for~$k\geq 2$.
\end{claim}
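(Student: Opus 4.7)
The plan is to prove the existence of $\cT_k$, $\cE_k$, and $\cF_k$ by simultaneous strong induction on $k$, systematically exploiting the closure property stated at the start of this section: whenever $\cX$ is a spanning tree of $\cH_i[X]$, the set $u\cX v$ is a spanning tree of $\cH_{i+|uv|/2}[uXv]$ when $|u|$ is even, and $u\revinv{\cX}v$ is a spanning tree of $\cH_{i+|uv|/2}[u\revinv{X}v]$ when $|u|$ is odd.

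For the base cases I would dispose of $k = 2$ trivially, since $E_2$ and $F_2$ are singletons and $\cE_2 := \cF_2 := \emptyset$ are vacuously spanning trees. For $k = 3$, I set $\cF_3 := \emptyset$ (as $|F_3| = 1$), observe that $\supp\delta = E_3$ to take $\cE_3 := \{\delta\}$, and verify that $\cT_3 := \{\alpha(\epsilon), \beta\}$ is a spanning tree because $\supp\alpha(\epsilon) \cup \supp\beta = D_3$ and $\supp\alpha(\epsilon) \cap \supp\beta = \{111000\}$; rooted at $\alpha(\epsilon)$, its three branches span respectively $\{111000, 101100, 101010\}$ via $\beta$ and the two singletons $\{110100\}$ and $\{110010\}$ vacuously.

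For the inductive step $k \geq 4$, the tree $\cE_k$ is immediate: taking $\cE_k := 10 \cdot \cT_{k-1}$ works by the closure property, as $|10|$ is even and $\cT_{k-1}$ spans $D_{k-1}$, so $10 \cdot \cT_{k-1}$ spans $10 \cdot D_{k-1} = E_k$. I then form $\cT_k := \cE_k \cup \cF_k \cup \{\tau\}$ for a single flippable tuple $\tau \in \Psi_k$ whose support has one element in $E_k$ and whose remaining elements lie in distinct recursive subtrees of $\cF_k$, mirroring how $\alpha(\epsilon)$ bridges $E_3$ and $F_3$ at $k = 3$. The bulk of the work is constructing $\cF_k$ itself, for which I would use the canonical decomposition $y = 1u0v$ of a Dyck word (with $y \in F_k$ iff $u \neq \epsilon$) to write $F_k = 1 D_{k-1} 0 \sqcup \bigsqcup_{i=1}^{k-2} 1 D_i 0 D_{k-1-i}$, span the principal block $1 D_{k-1} 0$ by $1 \revinv{\cT_{k-1}} 0$ (valid since $|1|$ is odd, $1 \cdot 0 = 10 \in D$, and $\revinv{D_{k-1}} = D_{k-1}$), span each residual block by lifts of $\alpha(w), \beta, \gamma, \delta$ whose supports remain in $F_k$, and glue everything together via bridging flippable tuples. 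Explicitly for $k = 4$, the set $\{\alpha(10), \gamma, \alpha(\epsilon) \cdot 10, 1 \revinv{\beta} 0\}$ already serves as $\cF_4$: rooted at $\alpha(10)$, its three support elements $11011000, 11010100, 11010010$ sit in the disjoint triples spanned respectively by $\gamma$, $1\revinv{\beta}0$, and $\alpha(\epsilon)\cdot 10$.

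The main obstacle is designing this $\cF_k$ construction uniformly in $k$ while simultaneously ensuring that (i) all pairwise support intersections between the chosen tuples are singletons, (ii) these intersections assemble into a single tree rather than a disconnected or cyclic hypergraph, and (iii) the bridging tuple $\tau$ used to form $\cT_k$ attaches to $\cF_k$ at vertices sitting in distinct recursive subtrees. Lemma~\ref{lem:conflict-free} handles the mark-conflict condition automatically, so the combinatorial heart of the argument is producing, at each level of the induction, an explicit skeleton of support intersections that forms a tree compatibly with $\cE_k$, the bridge $\tau$, and the next layer of the recursion.
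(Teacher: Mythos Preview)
Your base cases, your construction of $\cE_k = 10\cT_{k-1}$, and your explicit $\cF_4$ are exactly what the paper does. The overall inductive scheme is the right one.

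Where your proposal falls short is the construction of $\cF_k$ for $k\geq 5$, which is in fact the heart of the whole argument. You have the correct block decomposition $F_k = \bigsqcup_{i=1}^{k-1} 1D_i0D_{k-1-i}$ (the paper uses the same one, reparametrised as $F_{k,j}\setminus F_{k,j-1}$ with $j=i+1$), but ``span each residual block by lifts \ldots\ and glue everything together via bridging flippable tuples'' is a promise, not a proof. Each block $1D_i0D_{k-1-i}$ is a product over two varying Dyck factors; the closure property only lets you lift along one factor at a time, so you still owe an explicit scheme for gluing the slices $1D_i0v$ (over all $v$) and then the layers (over all $i$) into a single tree. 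The paper does exactly this: it grows a filtration $F_{k,2}\subset\cdots\subset F_{k,k}=F_k$, starts from $F_{k,2}=1100D_{k-2}$ spanned by $1100\cT_{k-2}$, and at each step attaches the new layer using the triples $\alpha((10)^{j-3})v$, whose three support elements are deliberately engineered to lie in $1\revinv{F_{j-1}}0v$, $1\revinv{E_{j-1}}0v$, and $F_{k,j-1}$ respectively. This three-way bridge is the key combinatorial idea you have not supplied.

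Your description of $\cT_k$ also diverges from the paper and is not pinned down. You look for a tuple with one element in $E_k$ and the rest in distinct subtrees of $\cF_k$; the paper instead takes $\tau=\beta(10)^{k-3}$, whose three elements land in $F_k$, $10E_{k-1}$, and $10F_{k-1}$, so that $\cT_k=\cF_k\cup\{\tau\}\cup 10\cE_{k-1}\cup 10\cF_{k-1}$. This avoids having to peer inside the recursive structure of $\cF_k$. Your variant may be workable, but you would need to name the tuple and verify the support intersections, just as you did for $\cF_4$.
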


We prove the claim by induction on~$k$.

For~$k=2$, we let~$\cE_2:=\emptyset$ and~$\cF_2:=\emptyset$, which trivially satisfy the conditions for a spanning tree, as $|E_2|=|F_2|=1$.

For~$k=3$, we let~$\cE_3:=\{\delta\}$ and~$\cF_3:=\emptyset$, which satisfy the conditions for a spanning tree, as $\supp\delta=E_3$ and~$|F_3|=1$.
We also let $\cT_3:=\{\alpha(\epsilon),\beta\}$, which satisfies the conditions for a spanning tree, because $\supp\alpha(\epsilon)\cup\supp\beta=D_3$ and $\lvert\supp\alpha(\epsilon)\cap\supp\beta\rvert=1$.

For~$k=4$, we let $\cF_4:=\{\tau_1,\tau_2,\tau_3,\tau_4\}$, where
\begin{align*}
\tau_1 &:= \{110110\underline{0}0,1101010\underline{0},110\underline{1}0010\} = \alpha(10),\\
\tau_2 &:= \{1\underline{1}001100,1101100\underline{0},11101\underline{0}00\} = \gamma,\\
\tau_3 &:= \{1\underline{1}110000,11\underline{1}00100,110101\underline{0}0\} = 1\revinv{\beta}0,\\
\tau_4 &:= \{1110\underline{0}010,11010\underline{0}10,1\underline{1}001010\} = \alpha(\epsilon)10.
\end{align*}
They belong to~$\Psi_k$ by construction.
Moreover, we have $\supp\tau_1\cup\supp\tau_2\cup\supp\tau_3\cup\supp\tau_4=F_4$, $\lvert\supp\tau_1\cap\supp\tau_i\rvert=1$ for every~$i\in\{2,3,4\}$, and $\supp\tau_i\cap\supp\tau_j=\emptyset$ for any distinct $i,j\in\{2,3,4\}$.
This shows that~$\cF_4$ is indeed a spanning tree of~$\cH_4[F_4]$.

Finally, we proceed by induction on~$k$ to construct~$\cT_k$ and~$\cE_k$ for~$k\geq 4$, and~$\cF_k$ for~$k\geq 5$.
The construction of~$\cE_k$ makes use of~$\cT_{k-1}$.
The construction of~$\cF_k$ (for~$k\geq 5$) makes use of $\cE_2,\ldots,\cE_{k-1}$, $\cF_2,\ldots,\cF_{k-1}$, and~$\cT_{k-2}$ (this is why the case~$k=4$ needs to be considered separately, as~$\cH_2$ has no spanning tree).
Finally, the construction of~$\cT_k$ makes use of~$\cF_k$,~$\cE_{k-1}$, and~$\cF_{k-1}$.

Constructing~$\cE_k$ from~$\cT_{k-1}$ for~$k\geq 4$ is straightforward: since $E_k=10D_{k-1}$, it suffices to take $\cE_k:=10\cT_{k-1}$, which is a spanning tree of~$\cH_k[E_k]$.

Now, we show how to construct~$\cF_k$ from $\cE_2,\ldots,\cE_{k-1}$, $\cF_2,\ldots,\cF_{k-1}$, and~$\cT_{k-2}$ for~$k\geq 5$.
For $2\leq j\leq k$, let $F_{k,j}:=\bigcup_{i=2}^j\{1\revinv{u}0v\mid u\in D_{i-1}$ and $v\in D_{k-i}\}$.
Since $F_k=F_{k,k}$, the following statement, which we prove by auxiliary induction on~$j$, directly implies the existence of a spanning tree of~$\cH_k[F_k]$:

\begin{claim}
For~$2\leq j\leq k$, there is a spanning tree~$\cF_{k,j}$ of~$\cH_k[F_{k,j}]$.
\end{claim}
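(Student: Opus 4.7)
The plan is to prove this sub-claim by induction on $j$, with $k \geq 5$ fixed.  For the base case $j = 2$, observe that $D_1 = \{10\}$ and $\revinv{10} = 10$, so
$F_{k,2} = \{1 \cdot 10 \cdot 0 \cdot v : v \in D_{k-2}\} = 1100\, D_{k-2}$.  Since $|1100| = 4$ is even, the closure rule recalled at the start of Section~\ref{sec:spanning} immediately yields that $\cF_{k,2} := 1100\, \cT_{k-2}$ is a spanning tree of $\cH_k[1100\, D_{k-2}] = \cH_k[F_{k,2}]$, with $\cT_{k-2}$ available from the outer induction on $k$ because $k - 2 \geq 3$.

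For the inductive step, the new vertices to absorb are $A_j := F_{k,j} \setminus F_{k,j-1} = \{1\revinv{u}0v : u \in D_{j-1},\ v \in D_{k-j}\}$, i.e.\ the Dyck words in $D_k$ whose first return to the abscissa occurs at position $2j$; via the bijection $u \leftrightarrow \revinv{u}$ on $D_{j-1}$ this set is parametrised by the product $D_{j-1} \times D_{k-j}$.  The main workhorse is the flippable tuple $\alpha(w)v$ for $w \in D_{j-3}$ and $v \in D_{k-j}$: a routine tracking of Dyck path heights shows that its support $\{1w11000v,\ 1w10100v,\ 1w10010v\}$ has the first two members in $A_j$ (first return at position $|w|+6 = 2j$) and the third in $A_{j-1} \subseteq F_{k,j-1}$ (first return at position $|w|+4 = 2(j-1)$).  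Each such tuple therefore hangs two new vertices of $A_j$ off an existing vertex of $\cF_{k,j-1}$.

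I would then choose a particular $\alpha(w_0)v_0$ as the outermost tuple of $\cF_{k,j}$ and split $F_{k,j}$ into three parts---namely $F_{k,j-1}$ itself (spanned by the inductive $\cF_{k,j-1}$) and a two-part partition of $A_j$ separating the two $A_j$-support elements of $\alpha(w_0)v_0$.  To span each half of $A_j$ I would exploit the product structure $A_j \cong D_{j-1} \times D_{k-j}$ in row--column fashion: for a fixed $v^* \in D_{k-j}$, span the \emph{row} $\{1u0v^* : u \in \cdot\}$ using the appropriate restriction of $1\revinv{\cT_{j-1}}0v^*$ ($|1|$ is odd), and for each $u$ span the \emph{column} $\{1u0v : v \in D_{k-j}\}$ using $1u0\,\cT_{k-j}$ ($|1u0|$ is even).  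The most delicate step will be matching the two-part split of $A_j$ with the ($\geq 3$)-part partition of $D_{j-1}$ dictated by the outermost tuple of $\cT_{j-1}$, and handling the boundary cases $j = 3$ or $k - j \in \{0,1,2\}$ in which $\cT_{j-1}$ or $\cT_{k-j}$ is unavailable; these last gaps are to be filled by substituting ad hoc combinations of $\cE_i$, $\cF_i$ and mirrored patterns such as $1\revinv{\beta}0$ and $\gamma$, in close analogy with the explicit $k = 4$ base case above.
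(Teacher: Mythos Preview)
Your base case $j=2$ is correct and identical to the paper's.  The inductive step, however, has a genuine gap.  You choose a \emph{single} outermost tuple $\alpha(w_0)v_0$, which forces you to split $A_j$ into exactly two pieces, each admitting a spanning tree in $\cH_k$.  Your row--column plan does not supply such trees: the row tree $1\revinv{\cT_{j-1}}0v^*$ spans \emph{all} of $\{1u0v^*:u\in D_{j-1}\}$, and restricting a spanning tree of $\cH_{j-1}$ to a proper subset of $D_{j-1}$ does not in general yield a spanning tree of the induced subhypergraph.  You correctly flag this as ``the most delicate step'' but leave it unresolved; together with the unhandled boundary cases $j=3$ and $k-j\le 2$ (where $\cT_{j-1}$ or $\cT_{k-j}$ do not exist), the argument is incomplete as written.

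The paper closes this gap with a different mechanism.  Rather than one connecting tuple, it uses one tuple $\tau(v)=\alpha((10)^{j-3})v$ \emph{for every} $v\in D_{k-j}$, and within each slice $1\revinv{D_{j-1}}0v$ it splits $D_{j-1}$ via the $E_{j-1}/F_{j-1}$ partition from the outer mutual induction.  The specific choice $w=(10)^{j-3}$ is not arbitrary: by~\eqref{eq:E/F-members} it places the two $A_j$-support elements of $\tau(v)$ one in $1\revinv{E_{j-1}}0v$ and one in $1\revinv{F_{j-1}}0v$, so the ready-made trees $1\revinv{\cE_{j-1}}0v$ and $1\revinv{\cF_{j-1}}0v$ span the pieces directly.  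This is exactly why the surrounding Claim maintains $\cE_i$ and $\cF_i$ alongside $\cT_i$; your plan uses $\cE_i,\cF_i$ only as an ad~hoc patch, missing their intended structural role.  A bonus of the paper's route is that $\cT_{k-j}$ is never needed, so the boundary cases in $k-j$ simply do not arise.
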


For~$j=2$, we have $F_{k,2}=\{1100v\mid v\in D_{k-2}\}=1100D_{k-2}$, so we let $\cF_{k,2}:=1100\cT_{k-2}$.
Now, suppose $3\leq j\leq k$.
The fact that~$E_{j-1}$ and~$F_{j-1}$ form a partition of~$D_{j-1}$ yields the following partition of the set~$F_{k,j}$:
\begin{equation}
\label{eq:F-partition}
\begin{split}
F_{k,j} &= F_{k,j-1}\cup\{1\revinv{u}0v\mid u\in D_{j-1}\text{ and }v\in D_{k-j}\}\\
&= F_{k,j-1}\cup\!\!\bigcup_{v\in D_{k-j}}\!\!1\revinv{D_{j-1}}0v\\
&= F_{k,j-1}\cup\!\!\bigcup_{v\in D_{k-j}}\!\!\bigl(1\revinv{E_{j-1}}0v\cup 1\revinv{F_{j-1}}0v\bigr).
\end{split}
\end{equation}
For every~$v\in D_{k-j}$, consider the following marked triple on~$F_{k,j}$:
\begin{equation}
\label{eq:tau(v)}
\tau(v):=\{1(10)^{j-3}110\underline{0}0v,1(10)^{j-3}1010\underline{0}v,1(10)^{j-3}\underline{1}0010v\}=\alpha((10)^{j-3})v.
\end{equation}
It belongs to~$\Psi_k$ by construction.
We have
\begin{equation*}
1(10)^{j-3}11000v\in 1\revinv{F_{j-1}}0v,\quad 1(10)^{j-3}10100v\in 1\revinv{E_{j-1}}0v,\quad 1(10)^{j-3}10010v\in F_{k,j-1},
\end{equation*}
where the first two memberships follow from~\eqref{eq:E/F-members}.
We take the spanning trees of the subhypergraphs of~$\cH_k$ induced by the sets of the partition of~$F_{k,j}$ given by~\eqref{eq:F-partition} and connect them into a single spanning tree of~$\cH_k[F_{k,j}]$ using the triples~$\tau(v)$ for all~$v\in D_{k-j}$.
That is, we let
\begin{equation*}
\cF_{k,j}:=\cF_{k,j-1}\cup\!\!\bigcup_{v\in D_{k-j}}\!\!\bigl(\{\tau(v)\}\cup 1\revinv{\cE_{j-1}}0v\cup 1\revinv{\cF_{j-1}}0v\bigr),
\end{equation*}
which is a spanning tree of~$\cH_k[F_{k,j}]$.

Finally, we show how to construct~$\cT_k$ from~$\cF_k$, $\cE_{k-1}$ and~$\cF_{k-1}$.
Consider the following marked triple on~$D_k$:
\begin{equation*}
\tau:=\{11100\underline{0}(10)^{k-3},1011\underline{0}0(10)^{k-3},\underline{1}01010(10)^{k-3}\bigr\}=\beta(10)^{k-3}.
\end{equation*}
It belongs to~$\Psi_k$ by construction.
We have
\begin{equation*}
111000(10)^{k-3}\in F_k,\qquad 101100(10)^{k-3}\in 10E_{k-1},\qquad 101010(10)^{k-3}\in 10F_{k-1},
\end{equation*}
where the first membership is by the definition of~$F_k$ for~$k\geq 4$ and the other two follow from~\eqref{eq:E/F-members}.
The sets~$F_k$, $10E_{k-1}$, and~$10F_{k-1}$ form a partition of~$D_k$.
We take the spanning trees of the subhypergraphs induced by these partition sets and connect them into a single spanning tree of~$\cH_k$ using the triple~$\tau$.
That is, we let $\cT_k:=\cF_k\cup\{\tau\}\cup 10\cE_{k-1}\cup 10\cF_{k-1}$, which is a spanning tree of~$\cH_k$.
\end{proof}

\begin{proof}[Proof of Lemma~\ref{lem:tree-count}]
The proof proceeds along the same lines as the proof of Lemma~\ref{lem:tree}, so we only highlight the differences.

Apart from the partition of~$D_4$ into two sets~$E_4$ and~$F_4$ defined by~\eqref{eq:E/F-definition}, we will use another one---a partition into sets~$E'_4$ and~$F'_4$ defined as follows:
\begin{equation*}
E'_4:=D_4\setminus\{11001100\}, \hspace{5em} F'_4:=\{11001100\}.
\end{equation*}
It has the following property analogous to~\eqref{eq:E/F-members}:
\begin{equation}
\label{eq:E'/F'-members}
10101100\in E'_4, \hspace{6em} 11001100\in F'_4.
\end{equation}
We define spanning trees $\cE'_4:=\{\tau'_1,\tau'_2,\tau'_3,\tau'_4,\tau'_5\}$ of~$\cH_4[E'_4]$ and~$\cF'_4:=\emptyset$ of~$\cH_4[F'_4]$, where
\begin{alignat*}{2}
\tau'_1 &:= \{1\underline{1}110000,11\underline{1}01000,1110\underline{0}100,110101\underline{0}0\} && = \smash[t]{1\revinv{\delta}0},\\
\tau'_2 &:= \{110110\underline{0}0,1101010\underline{0},110\underline{1}0010\}                      && = \alpha(10),\\
\tau'_3 &:= \{1110\underline{0}010,11010\underline{0}10,1\underline{1}001010\}                      && = \alpha(\epsilon)10,\\
\tau'_4 &:= \{11100\underline{0}10,1011\underline{0}010,\underline{1}0101010\}                      && = \beta 10,\\
\tau'_5 &:= \{1011100\underline{0},101101\underline{0}0,1010\underline{1}100,10\underline{1}01010\} && = 10\delta.
\end{alignat*}
We have $\supp\tau'_1\cup\cdots\cup\supp\tau'_5=E'_4$, $\lvert\supp\tau'_{i-1}\cap\supp\tau'_i\rvert=1$ for $i\in\{2,3,4,5\}$, and $\supp\tau'_i\cap\supp\tau'_j=\emptyset$ whenever $|i-j|\geq 2$, which shows that~$\cE'_4$ is indeed a spanning tree of~$\cH_4[E'_4]$, and~$\cF'_4$ is a spanning tree of~$\cH_4[F'_4]$ because~$|F'_4|=1$.

To obtain many spanning trees of~$\cH_k$ for~$k\geq 6$, we proceed as in the proof of Lemma~\ref{lem:tree} except that we introduce variants to the construction of the spanning tree~$\cF_{k,5}$ of~$\cH_k[F_{k,5}]$.
Consider an arbitrary partition of~$D_{k-5}$ into two sets~$X$ and~$Y$.
The fact that~$E_4$ and~$F_4$ as well as~$E'_4$ and~$F'_4$ form partitions of~$D_4$ yields the following partition of~$F_{k,5}$ analogous to~\eqref{eq:F-partition}:
\begin{equation}
\label{eq:F-partition'}
\begin{split}
F_{k,5} &= F_{k,4}\cup\{1\revinv{u}0v\mid u\in D_4\text{ and }v\in D_{k-5}\}\\
&= F_{k,4}\cup\!\!\bigcup_{v\in D_{k-5}}\!\!1\revinv{D_4}0v\\
&= F_{k,4}\cup\bigcup_{v\in X}\bigl(1\revinv{E_4}0v\cup 1\revinv{F_4}0v\bigr)\cup\bigcup_{v\in Y}\bigl(1\revinv{E'_4}0v\cup 1\revinv{F'_4}0v\bigr).
\end{split}
\end{equation}
Consider the following triples on~$F_{k,5}$, where the first one is a special case of~\eqref{eq:tau(v)}:
\begin{alignat*}{3}
\tau(v)  &:= \{11010110\underline{0}0v,110101010\underline{0}v,11010\underline{1}0010v\} && = \alpha(1010)v \quad && \text{for }v\in X,\\
\tau'(v) &:= \{11100110\underline{0}0v,111001010\underline{0}v,11100\underline{1}0010v\} && = \alpha(1100)v \quad && \text{for }v\in Y.
\end{alignat*}
They belong to~$\Psi_k$ by construction.
We have
\begin{alignat*}{4}
1101011000v &\in 1\revinv{F_4}0v,  \quad & 1101010100v &\in 1\revinv{E_4}0v,  \quad & 1101010010v &\in F_{k,4}, \quad && \text{for }v\in X,\\
1110011000v &\in 1\revinv{F'_4}0v, \quad & 1110010100v &\in 1\revinv{E'_4}0v, \quad & 1110010010v &\in F_{k,4}, \quad && \text{for }v\in Y,
\end{alignat*}
where the first two memberships follow from~\eqref{eq:E/F-members} and~\eqref{eq:E'/F'-members}, respectively.
We take the spanning trees of the subhypergraphs of~$\cH_k$ induced by the sets of the partition of~$F_{k,5}$ given by~\eqref{eq:F-partition'} and connect them into a single spanning tree of~$\cH_k[F_{k,5}]$ using the triples~$\tau(v)$ for all~$v\in X$ and the triples~$\tau'(v)$ for all~$v\in Y$.
That is, we let
\begin{equation*}
\cF_{k,5}:=\cF_{k,4}\cup\bigcup_{v\in X}\bigl(\{\tau(v)\}\cup 1\revinv{\cE_4}0v\cup 1\revinv{\cF_4}0v\bigr)\cup\bigcup_{v\in Y}\bigl(\{\tau'(v)\}\cup 1\revinv{\cE'_4}0v\cup 1\revinv{\cF'_4}0v\bigr),
\end{equation*}
which is a spanning tree of~$\cH_k[F_{k,5}]$.
Then, we continue with the constructions of~$\cF_k$ and~$\cT_k$ exactly as in the proof of Lemma~\ref{lem:tree}.

Clearly, distinct choices of the partition of~$D_{k-5}$ into two sets~$X$ and~$Y$ in the procedure above give rise to distinct spanning trees~$\cF_{k,5}$ of~$\cH_k[F_{k,5}]$, which consequently give rise to distinct spanning trees~$\cF_k$ of~$\cH_k[F_k]$ and~$\cT_k$ of~$\cH_k$.
Since $|D_{k-5}|=\frac{1}{k-4}\binom{2k-10}{k-5}\geq 2^{k-6}$ for~$k\geq 6$, there are at least~$2^{2^{k-6}}$ distinct partitions of~$D_{k-5}$ into sets~$X$ and~$Y$, which give rise to at least~$2^{2^{k-6}}$ distinct spanning trees of~$\cH_k$.
\end{proof}

\section{Alternative proof of the middle levels conjecture}
\label{sec:mlc}

As we explained in Section~\ref{sec:Hnk}, Theorem~\ref{thm:odd} implies that the bipartite Kneser graph $H(2k+1,k)$ has a Hamilton path for every~$k\geq 1$.
We proceed to prove that it even has a Hamilton cycle, yielding an alternative proof of the middle levels conjecture, first proved in~\cite{MR3483129}.
Plugging in Theorem~\ref{thm:count} instead of Theorem~\ref{thm:odd}, we obtain an alternative proof of the fact that $H(2k+1,k)$ contains double-exponentially many distinct Hamilton cycles, also first proved in~\cite{MR3483129}.

\begin{theorem}
For every integer\/~$k\geq 1$, the bipartite Kneser graph\/~$H(2k+1,k)$ has a Hamilton cycle.
For every integer\/~$k\geq 6$, the bipartite Kneser graph\/~$H(2k+1,k)$ has at least\/ $2^{2^{k-6}}$ distinct Hamilton cycles.
\end{theorem}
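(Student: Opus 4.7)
The plan is to lift Theorem~\ref{thm:odd} (respectively Theorem~\ref{thm:count}) from $O_k$ to $H(2k+1,k)$ via the construction sketched in Section~\ref{sec:Hnk}, patching up the lift when it falls apart into two cycles. The very small cases $k=1$ and $k=2$, where Theorem~\ref{thm:odd} does not apply, I would handle by inspection: $H(3,1)$ is a $6$-cycle and a Hamilton cycle in $H(5,2)$ can be exhibited directly on its $20$ vertices. In fact, the lift construction of Section~\ref{sec:Hnk} applied to the triangle $O_1$ already produces a Hamilton cycle in $H(3,1)$.

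For $k\geq 3$, let $C=(x_1,\ldots,x_N)$ with $N:=\binom{2k+1}{k}$ be the Hamilton cycle in $O_k$ given by Theorem~\ref{thm:odd}, and form the sequences $\tilde{D}_1:=(x_1,\overline{x_2},x_3,\overline{x_4},\ldots)$ and $\tilde{D}_2:=(\overline{x_1},x_2,\overline{x_3},x_4,\ldots)$ in $H(2k+1,k)$. If $N$ is odd, their concatenation is already a Hamilton cycle, and we are done. The interesting case is $N$ even, in which $\tilde{D}_1$ and $\tilde{D}_2$ are two vertex-disjoint spanning cycles that must be merged into one.

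The repair uses the freedom already present in the construction of $C$. Recall that $C$ is obtained from the cycle factor~$\cC_k$ by taking symmetric differences with flipping cycles $W_1,\ldots,W_m\subseteq G_k$ coming from a spanning tree of~$\cH_k$. Since each $W_j$ is an \emph{even} cycle, its lift to $H(2k+1,k)$ consists of two vertex-disjoint cycles $\tilde{W}_j^{(1)},\tilde{W}_j^{(2)}$, each of which is a flipping cycle with respect to the lifted cycle factor $\tilde{\cC}_k$. Redoing the whole sym-diff construction inside $H(2k+1,k)$ using both copies of every $\tilde{W}_j^{(i)}$ exactly reproduces $\tilde{D}_1\cup\tilde{D}_2$; the repair is to use only one copy $\tilde{W}_{j_0}^{(1)}$ for a single well-chosen index $j_0$. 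The alternation inherited from the $O_k$-construction makes $\tilde{W}_{j_0}^{(1)}$ an alternating cycle with respect to $\tilde{D}_1\cup\tilde{D}_2$, so the symmetric difference is automatically $2$-regular and spanning, and it is a single Hamilton cycle precisely when the three (or four) edges of $\tilde{W}_{j_0}^{(1)}$ shared with $\tilde{D}_1\cup\tilde{D}_2$ are split with at least one in each of $\tilde{D}_1$ and $\tilde{D}_2$.

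The main obstacle is therefore to prove that some index $j_0$ yielding the mixed split always exists. My plan is a parity / double-counting argument: each edge of $C$ contributes one lift to $\tilde{D}_1$ and one to $\tilde{D}_2$, so the imbalance of shared edges of $\tilde{W}_j^{(1)}$ between $\tilde{D}_1$ and $\tilde{D}_2$ cannot be simultaneously maximal for every $j$ without contradicting a global parity identity forced by the spanning-tree structure of Lemma~\ref{lem:tree}. If that argument proves stubborn, an alternative is to modify the spanning tree of~$\cH_k$ by swapping a single flippable tuple via Lemma~\ref{lem:flip-closure}, producing a different Hamilton cycle in~$O_k$ whose own lift is already a single cycle in $H(2k+1,k)$. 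For the counting statement, distinct conflict-free spanning trees of~$\cH_k$ give distinct Hamilton cycles in~$O_k$ by Lemma~\ref{lem:reduction}, and the lift-and-repair recipe is injective because projecting a Hamilton cycle in $H(2k+1,k)$ to $O_k$ recovers the underlying Hamilton cycle; hence the lower bound $2^{2^{k-6}}$ of Theorem~\ref{thm:count} transfers to $H(2k+1,k)$ for $k\geq 6$.
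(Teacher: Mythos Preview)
Your overall strategy diverges from the paper's, and the point where it diverges is exactly where it remains incomplete. You correctly identify that when $N=\binom{2k+1}{k}$ is even the lift $\tilde D_1\cup\tilde D_2$ splits into two cycles, and you propose to merge them by dropping one copy of a lifted flipping cycle. But the existence of a suitable index $j_0$ is the whole problem, and you do not prove it: the ``parity / double-counting'' argument is only announced, not executed, and the fallback of swapping a flippable tuple is likewise not carried out. This is a genuine gap, not a routine detail. Your injectivity claim for the counting part is also shaky: after the repair step the projection to $O_k$ no longer returns the original Hamilton cycle $C$ (you have altered edges on one sheet only), so ``project to recover $C$'' does not work as stated.

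The paper sidesteps the parity issue entirely by using a different model of $H(2k+1,k)$ and a different gluing. It realises $H(2k+1,k)$ as the graph $M_k$ consisting of $G_k0$, $\overline{G_k}1$, and the matching edges $\{x0,x1\}$ for $x\in B_k^0$. The crucial structural fact (which you have available but do not exploit) is that the Hamilton cycle $C$ in $G_k^+$ produced by the proof of Theorem~\ref{thm:odd} contains \emph{all} the special edges $\{x,\overline{x}\}$ with $x\in D_k$, because every flipping cycle lies in $G_k$. Hence deleting these edges from $C$ leaves a family $\mathcal{Q}$ of paths in $G_k$; on the other sheet one uses the \emph{original} paths $\overline{\mathcal{P}_k}1$ unchanged. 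Replacing each missing edge $\{x0,\overline{x}0\}$ by the detour $\overline{x}0\to\overline{x}1\to\cdots\to x1\to x0$ along $\overline{P(x)}1$ splices the two sheets into a single Hamilton cycle of $M_k$, with no case distinction on the parity of $N$. Distinct spanning trees of $\cH_k$ yield distinct families $\mathcal{Q}$ and hence distinct Hamilton cycles in $M_k$, giving the count immediately. If you want to salvage your approach, the cleanest fix is to abandon the $\tilde D_1,\tilde D_2$ lift and use this detour construction instead.
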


\begin{proof}
For any graph~$G$ whose vertices are bitstrings, we let $Gx$ denote the graph obtained by appending a bitstring~$x$ to all vertices of~$G$, and we let $\ol{G}$ denote the graph obtained from~$G$ by taking the complement of each vertex.
The graph~$H(2k+1,k)$ is isomorphic to the graph~$M_k$ obtained as the disjoint union of~$G_k0$ and~$\ol{G_k}1$ plus the \emph{matching edges} $\{x0,x1\}$ with $x\in B_k^0$.
The isomorphism is given by interpreting all bitstrings of length~$2k+1$ as characteristic vectors of $k$-element and $(k+1)$-element subsets of~$[2k+1]$.

For~$k=1$ and~$k=2$, the theorem can be verified directly.
For~$k\geq 3$, Theorem~\ref{thm:odd} yields a Hamilton cycle in the graph~$G_k^+$.
Moreover, it follows from the proof of Theorem~\ref{thm:odd} that this Hamilton cycle is obtained as the symmetric difference of the cycle factor~$\cC_k$ in~$G_k^+$ with some set of flipping cycles.
The cycle factor~$\cC_k$ is obtained from a collection of paths~$\cP_k$ in~$G_k$ by adding the edges $\{x,\ol{x}\}$ with $x\in D_k$, where the path $P(x)\in\cP_k$ connects $x$ and~$\ol{x}$.
The flipping cycles are cycles in~$G_k$.
Therefore, the resulting Hamilton cycle~$C$ in~$G_k^+$ contains some edges of~$G_k$ plus the edges $\{x,\ol{x}\}$ with $x\in D_k$.
We remove the latter edges from~$C$, thus obtaining a collection of paths~$\cQ$ in~$G_k$.
Then, to obtain a Hamilton cycle in~$M_k$, we take the paths in $\cQ 0$ and~$\ol{\cP_k}1$ and add the matching edges $\{x0,x1\}$ and~$\{\ol{x}0,\ol{x}1\}$ with $x\in D_k$.
This is indeed a Hamilton cycle in~$M_k$, obtained from the Hamilton cycle $C0$ in~$G_k^+0$ by removing every edge of the form $\{x0,\ol{x}0\}$ (with $x\in D_k$) and replacing it by the path going from~$\ol{x}0$ to~$\ol{x}1$, then along $\ol{P(x)}1$ from~$\ol{x}1$ to~$x1$, and then to~$x0$.

For~$k\geq 6$, Theorem~\ref{thm:count} yields at least $2^{2^{k-6}}$ distinct Hamilton cycles in~$G_k^+$ with the properties discussed above, which give rise to at least $\smash[t]{2^{2^{k-6}}}$ distinct Hamilton cycles in~$H(2k+1,k)$ by the construction described above.
\end{proof}

\section{Open problems}
\label{sec:open}

As mentioned in Section~\ref{sec:algo}, our construction of a Hamilton cycle in the odd graph~$O_k$ translates straightforwardly into an algorithm to compute this cycle in polynomial time, that is, polynomial in the size of the graph.
It remains open whether this can be improved to an algorithm whose running time is polynomial in~$k$, ideally, constant for each generated vertex.
This might even yield a simpler algorithmic solution for the middle levels conjecture (recall Section~\ref{sec:mlc} and \cite{MR4075363}).

As Hamiltonicity of the odd graphs~$O_k$ is now settled, one may turn to Biggs' more general conjecture that~$O_k$ has $\lfloor(k+1)/2\rfloor$ edge-disjoint Hamilton cycles for $k\geq 3$~\cite{MR556008}.
Clearly, edge-disjointness is a much stronger requirement than distinctness guaranteed by Theorem~\ref{thm:count}.
A starting point for such a construction may be the factorizations of~$O_k$ into cycles described by Johnson and Kierstead~\cite{MR2128031}.
Our construction of a cycle factor is unrelated to these factorizations, and we do not even see how to extend it to a construction of two edge-disjoint cycle factors.

Another intriguing open problem is whether all Kneser graphs~$K(n,k)$ except the Petersen graph $O_2=K(5,2)$ are Hamiltonian.
Despite serious attempts, we have not been able to generalize the methods presented in this paper, nor the inductive decomposition technique of Johnson~\cite{MR2046083}, to the general case.
In light of Theorem~\ref{thm:sparse}, the sparsest open case is $n=2k+3$.

A strengthening of the concept of containing a Hamilton cycle is to contain the $r$th power of a Hamilton cycle.
To this end, Katona~\cite{MR2181045} conjectured that the vertices of $K(n,k)$ can be ordered so that any $r+1$ consecutive vertices for $r:=\lfloor n/k\rfloor-2$ are disjoint sets, which he proved for $k=2$ using Walecki's theorem.
It seems plausible that Katona's conjecture holds even for $r:=\lceil n/k\rceil-2$.
Theorem~\ref{thm:odd} confirms this for the case~$n=2k+1$ (where $r=1$).

\bibliographystyle{alpha}
\bibliography{odd-refs}

\end{document}